\newcommand{\arXiv}[1]{\href{http://arxiv.org/abs/#1}{\tt arXiv:\nolinkurl{#1}}}
\newcommand{\googlebooks}[1]{(preview at \href{http://books.google.com/books?id=#1}{google books})}
\definecolor{dark-red}{rgb}{0.7,0.25,0.25}
\definecolor{dark-blue}{rgb}{0.15,0.15,0.55}
\definecolor{medium-blue}{rgb}{0,0,.8}
\theoremstyle{plain}
\newtheorem{thm}{Theorem}[section]
\newtheorem*{thm*}{Theorem}
\newtheorem*{cor*}{Corollary}
\newtheorem{cor}[thm]{Corollary}
\newtheorem{lem}[thm]{Lemma}
\theoremstyle{definition}
\newtheorem{defn}[thm]{Definition}
\newtheorem{nota}[thm]{Notation}
\newtheorem{rem}[thm]{Remark}
\DeclareMathOperator{\fdim}{fdim}
\DeclareMathOperator{\Tr}{Tr}
\DeclareMathOperator{\tr}{tr}
\DeclareMathOperator*{\CC}{\mathbb{C}}
\newcommand{\comment}[1]{}
\newcommand{\be}{\begin{enumerate}}
\newcommand{\ee}{\end{enumerate}}
\newcommand{\N}{\mathbb{N}}
\newcommand{\Z}{\mathbb{Z}}
\newcommand{\C}{\mathbb{C}}
\newcommand{\F}{\mathbb{F}}
\newcommand{\I}{\infty}
\newcommand{\cA}{\mathcal{A}}
\newcommand{\cM}{\mathcal{M}}
\newcommand{\cN}{\mathcal{N}}
\newcommand{\cP}{\mathcal{P}}
\newcommand{\noshow}[1]{}
\begin{document}
\title{Free product von Neumann algebras associated to graphs, and Guionnet, Jones, Shlyakhtenko subfactors in infinite depth}
\author{ Michael Hartglass }
\date{\today}
\maketitle

\begin{abstract}
Given a  subfactor planar algebra $\cP$, Guionnet, Jones and Shlyakhtenko give a diagrammatic construction of a $II_{1}$ subfactor whose planar algebra is $\cP$.  They showed if $\cP$ is finite-depth, then the factors are interpolated free group factors, and they identified the parameters.  We prove if $\cP$ is infinite-depth, then the factors are isomorphic to $L(\F_{\I})$.
\end{abstract}

%%%%%%%%%%%%%%%%%%%%%%%%%%%%%%%%%%%%%%%%%%%%%%%%%%

\section{Introduction}

In \cite{MR696688}, Jones initiated the study of modern subfactor theory.  Given a finite index $II_{1}$ subfactor $A_{0} \subset A_{1}$, one computes its standard invariant: two towers $(A_{0}' \cap A_{j}: j \geq 0)$  and $(A_{1}' \cap A_{j}: j \geq 1)$ of finite dimensional von Neumann algebras \cite{MR696688}. The standard invariant has been axiomatized by Ocneanu's paragroups \cite{MR996454}, Popa's $\lambda-$lattices \cite{MR1334479}, and Jones' subfactor planar algebras \cite{ArXiv:1007.1158}. Popa showed that given a standard invariant $\cP$, we can reconstruct a $II_{1}$ subfactor $A_{0} \subset A_{1}$ whose standard invariant is $\cP$ \cite{MR1334479}. Guionnet, Jones, and Shlyakhtenko \cite{MR2732052} give a planar-algebraic proof of the above result.  Moreover, if $\cP$ is finite depth with loop paramater $\delta > 1$, they showed that $A_{k}$, the $k^{th}$ factor in the Jones tower, is isomorphic to $L(\F(1 + 2\delta^{-2k}(\delta - 1)I))$ where $I$ is the global index of $\cP$ \cite{MR2807103}.   Kodiyalam and Sunder also obtained this formula when $\cP$ is depth 2 \cite{MR2574313, MR2557729}.  In this paper, we prove the following theorem:

\begin{thm*}
If $\cP$ is infinite depth, then every factor in the construction of \cite{MR2732052} is isomorphic to $L(\F_{\I})$.
\end{thm*}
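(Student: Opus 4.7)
The plan is to realize each factor $A_k$ of the GJS tower as a free-product von Neumann algebra attached to the principal graph $\Gamma$ of $\cP$, and then to identify this algebra as $L(\F_\I)$ via a free-dimension computation.

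First I would introduce, for any pointed weighted bipartite graph $(\Gamma,\mu)$ with $\mu$ a positive eigenvector of the adjacency matrix of eigenvalue $\delta$, a canonical tracial von Neumann algebra $\cM(\Gamma,\mu)$ built from loops on $\Gamma$ rooted at the base vertex, with trace given by weighted path counting. Specializing to the principal graph of $\cP$ should yield the factor $A_0$, and a shift of the base vertex should yield the higher $A_k$. Matching the planar-tangle multiplication on $\Gr_k(\cP)$ with loop concatenation on the graph side is the essential structural step, and lets one trade the planar-algebraic input for a purely graph-theoretic input.

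Next I would decompose $\cM(\Gamma,\mu)$ as an amalgamated free product over its diagonal subalgebra $\bigoplus_{v \in V(\Gamma)} \C$: each vertex $v$ contributes a matrix block $M_{n(v)}(\C)$ (amplified by $\mu(v)^2$) and each edge $e$ contributes a free semicircular or free Poisson generator. Applying Dykema's compression and absorption formulas then rewrites this as an interpolated free group factor $L(\F_{t(\Gamma)})$, where $t(\Gamma)$ is a sum of vertex and edge contributions. For finite $\Gamma$ this should recover the formula $t = 1 + 2\delta^{-2k}(\delta-1)I$ of \cite{MR2807103}. When $\cP$ is infinite depth, $\Gamma$ has infinitely many vertices and edges and the global index $I = \I$, so $t(\Gamma) = \I$; to make this rigorous I would exhibit $\cM(\Gamma,\mu)$ as the weak-$*$ closure of an ascending tower $\cM(\Gamma_n,\mu_n)$ indexed by finite subgraphs exhausting $\Gamma$, each of which is an interpolated free group factor $L(\F_{t_n})$ with $t_n \nearrow \I$. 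Any such inductive limit is $L(\F_\I)$.

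The main obstacle is the identification of the GJS factor $A_k$ with the abstract graph algebra $\cM(\Gamma,\mu)$: this requires translating planar-algebraic loop-counting into graph-theoretic loop-counting and verifying that the two traces coincide. Once that model is in place, the free product decomposition is essentially bookkeeping in Dykema's calculus, and the infinite-depth conclusion follows by the approximation argument above. A secondary technical point is ensuring that the finite-subgraph approximations produce trace-preserving inclusions whose union is weak-$*$ dense in $\cM(\Gamma,\mu)$, which amounts to a careful cutoff argument on loop length together with control of the matrix units at newly added vertices and the semicircular generators at newly added edges.
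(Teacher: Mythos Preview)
Your overall architecture matches the paper's: model $A_0$ as a corner of a graph von Neumann algebra $\cM(\Gamma)$ built as an amalgamated free product over $\ell^\infty(\Gamma)$, approximate by finite truncations $\Gamma_k$, identify each $p_*\cM(\Gamma_k)p_*$ as an interpolated free group factor $L(\F_{t_k})$ with $t_k\to\infty$, and pass to the limit. The identification of $A_0$ with $p_*\cM(\Gamma)p_*$ is not something you have to redo from scratch---it is already established in \cite{MR2807103}, and the paper simply imports it.

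There is, however, a genuine gap in your argument, and it is precisely where the paper expends almost all of its effort. You write that once $t_n\nearrow\infty$, ``any such inductive limit is $L(\F_\infty)$.'' This is not known. An increasing union of interpolated free group factors $L(\F_{t_n})$ is only guaranteed to be $L(\F_{\lim t_n})$ when the connecting maps are \emph{standard embeddings} in Dykema's sense (Definition~\ref{defn:Dyk} and Remark~\ref{rem:Dyk}(3)). Arbitrary trace-preserving inclusions do not suffice; indeed, without the standard-embedding hypothesis one cannot even conclude the limit is a free group factor at all. Your ``secondary technical point''---density and trace preservation of the finite approximants---is necessary but far from sufficient.

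The bulk of the paper (all of Section~\ref{sec:VNGraph}) is devoted to exactly this missing step: proving that for any finite connected weighted graph $\Gamma$, $\cM(\Gamma)$ has the form $L(\F_{t_\Gamma})\oplus(\text{finite abelian})$, and, crucially, that whenever $\Gamma\subset\Gamma'$ the induced inclusion $p^\Gamma\cM(\Gamma)p^\Gamma\hookrightarrow p^\Gamma\cM(\Gamma')p^\Gamma$ is a standard embedding. This is done by an inductive edge-by-edge and vertex-by-vertex analysis (Lemmas~\ref{lem:H1}--\ref{lem:H3}), repeatedly invoking the structural Lemmas~\ref{lem:DR1} and~\ref{lem:DR2} to exhibit each step as free absorption of an $L(\Z)$ or a finite-dimensional piece, which is what makes the embedding standard. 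Only after this is in place does the free-dimension computation and the limit $t_k\to\infty$ yield $L(\F_\infty)$.

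A minor correction: the odd-index factors $A_{2k+1}$ are not obtained by shifting the base vertex on $\Gamma$; they are corners of the analogous algebra built on the \emph{dual} principal graph $\Gamma^*$, to which the same analysis is applied separately.
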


 Using this theorem, we recover a diagrammatic proof of a result of Popa and Shlyakhtenko for $\cP$ infinite depth \cite{MR2051399}:

 \begin{cor*}
 Every infinite depth subfactor planar algebra is the standard invariant of $\cN \subset \cM$ where $\cN, \cM \cong L(\F_{\I})$.
 \end{cor*}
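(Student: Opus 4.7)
The corollary should follow essentially immediately from the main theorem together with the Guionnet--Jones--Shlyakhtenko construction itself, so the ``proof'' is really just an assembly step rather than a substantive argument.

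My plan is as follows. First I would invoke the construction of \cite{MR2732052}: given the infinite depth subfactor planar algebra $\cP$, this produces a $II_1$ subfactor $A_0 \subset A_1$ (with $A_k$ the factors in its Jones tower) whose standard invariant is exactly $\cP$. This already gives a subfactor realizing $\cP$; the only remaining task is to identify the isomorphism class of $A_0$ and $A_1$ as abstract von Neumann algebras.

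Next I would apply the main theorem of the paper to the infinite depth planar algebra $\cP$. This directly asserts that every factor $A_k$ in the GJS construction is isomorphic to $L(\F_\I)$, so in particular both $A_0$ and $A_1$ are isomorphic to $L(\F_\I)$.

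Setting $\cN := A_0$ and $\cM := A_1$ then produces the desired inclusion $\cN \subset \cM$ of copies of $L(\F_\I)$ whose standard invariant is $\cP$, finishing the corollary. There is no real obstacle here, since all of the work has been shifted into the main theorem; the corollary is simply the observation that once one knows the GJS factors are free group factors on infinitely many generators, the Popa--Shlyakhtenko statement \cite{MR2051399} for infinite depth planar algebras drops out of the GJS machinery without any additional input.
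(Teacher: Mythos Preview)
Your proposal is correct and matches the paper's approach exactly: the corollary is presented in the paper as an immediate consequence of the main theorem together with the fact from \cite{MR2732052} that $A_{0}\subset A_{1}$ has standard invariant $\cP$, and no separate proof is given. There is nothing to add.
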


\subsection{Outline of the proof}

To prove the above theorem, we will bootstrap the proof from \cite{MR2807103} of the finite-depth case to the infinite-depth case using standard embedding tricks.

\paragraph {The GJS Construction:}

      We will recall the construction of Guionnet, Jones and Shlyakhtenko. For more details, see \cite{MR2732052} and \cite{MR2807103}. Let $\cP = (\cP_{n}^{\pm})_{n \geq 0}$ be a subfactor planar algebra with loop parameter $\delta > 1$ (see \cite{ArXiv:1007.1158} for the definition of a subfactor planar algebra).  Set $Gr_{k}(\cP^{+})= \bigoplus_{n \geq 0} \cP_{k,n}^{+}$ where $\cP^{+}_{k,n} = \cP^{+}_{k+n}$ and an element of $\cP^{+}_{k,n}$ is represented as
       $$
    \begin{tikzpicture}
       \draw (-.5, -.5) -- (-.5, .5) -- (.5, .5) -- (.5, -.5) -- (-.5, -.5);
       \draw (-1, -1) -- (-1, 1) -- (1, 1) -- (1, -1) -- (-1, -1);
       \draw[ultra thick] (0, 1) -- (0, .5);
       \draw[ultra thick] (1, 0) -- (.5, 0);
       \draw[ultra thick] (-1, 0) -- (-.5, 0);
       \node at (0, 0) {$x$};
       \node at (-.75, .25) {$k$};
       \node at (.75, .25) {$k$};
       \node at (.30, .75) {$2n$};
       \node at (-.5, .6) {*};
       \node at (-1, 1.1) {*};
       \end{tikzpicture}
       $$
       where the * is always in an unshaded region and a thick string with a $j$ next to it denotes $j$ parallel strings.  If $x \in \cP_{n+k}^{+}$ and $y \in \cP_{m + k}^{+}$ then define a multiplication $\wedge_{k}$ by
       $$
        x \wedge_{k} y =
       \begin{tikzpicture} [baseline = 0 cm]
       \draw (-1.5, -.5) -- (-1.5, .5) -- (-.5, .5) -- (-.5, -.5) -- (-1.5, -.5);
       \draw[ultra thick] (-1, 1) -- (-1, .5);
       \draw[ultra thick] (0, 0) -- (-.5, 0);
       \draw[ultra thick] (-2, 0) -- (-1.5, 0);
       \node at (-1, 0) {$x$};
       \node at (-1.75, .25) {$k$};
       \node at (-.7, .75) {$2n$};
       \node at (-1.5, .6) {*};
       \draw (.5, -.5) -- (.5, .5) -- (1.5, .5) -- (1.5, -.5) -- (.5, -.5);
       \draw[ultra thick] (1, 1) -- (1, .5);
       \draw[ultra thick] (2, 0) -- (1.5, 0);
       \draw[ultra thick] (0, 0) -- (.5, 0);
       \node at (1, 0) {$y$};
       \node at (1.75, .25) {$k$};
       \node at (1.30, .75) {$2m$};
       \node at (.5, .6) {*};
       \draw (-2, -1) -- (2,-1)--(2,1)--(-2, 1)--(-2, -1);
       \node at (-2, 1.1) {*};
       \end{tikzpicture}
       $$
        which is an element in $\cP_{k, m + n}^{+}$.  One can endow $Gr_{k}(\cP^{+})$ with the following trace:  if $x \in \cP^{+}_{k, n}$ then
        $$
        \tr(x) = \delta^{-k} \cdot \begin{tikzpicture}[baseline=.5cm]
	\draw (.5,.5) --(-.5,.5) -- (-.5,-.5) -- (.5,-.5) -- (.5, .5) ;
    \draw (.7,2)--(-.7,2)--(-.7,1)--(.7, 1)--(.7,2);
\draw[ultra thick] (-.5, 0) arc (90:270: .5cm) -- (.5, -1) arc(-90:90: .5cm);
\draw[ultra thick] (0, .5) -- (0, 1);
\node at (0, 0) {$x$};
\node at (-.5, .6) {*};
\node at (-.7, 2.1) {*};
\node at (0, 1.5) {$\sum TL$};
\node at (0, -.75) {$k$}; \end{tikzpicture}
$$
 where $\sum TL$ denotes the sum of all Temperely-Lieb diagrams, i.e. all planar pairings of the $2n$ strings on top of $x$.  This trace is positive definite, and one can form the von Neumann algebra $A_{k}$ which is the strong closure of $Gr_{k}(\cP^{+})$ acting on $L^{2}(Gr_{k}(\cP^{+}))$ by left multiplication (under $\wedge_{k}$).  It is shown that $A_{k}$ is a $II_{1}$ factor.  Moreover one can view $x \in A_{k}$ as an element in $A_{k+1}$ as follows:
 $$
    \begin{tikzpicture} [baseline = 0 cm]
       \draw (-.5, -.5) -- (-.5, .5) -- (.5, .5) -- (.5, -.5) -- (-.5, -.5);
       \draw[ultra thick] (0, 1) -- (0, .5);
       \draw[ultra thick] (1, 0) -- (.5, 0);
       \draw[ultra thick] (-1, 0) -- (-.5, 0);
       \node at (0, 0) {$x$};
       \node at (-.75, .25) {$k$};
       \node at (.75, .25) {$k$};
       \node at (-.5, .6) {*};
       \node at (-1, 1.1) {*};
       \draw[thick] (-1, -1) -- (-1, 1) -- (1, 1) -- (1, -1) -- (-1, -1);
       \draw (-1, -.65) -- (1, -.65);
    \end{tikzpicture} \, .
    $$
   With this identification, $A_{k}$ is a von Neumann subalgebra of $A_{k+1}$ and $A_{0} \subset A_{1} \subset \cdots \subset A_{k} \subset \cdots$ is a Jones tower of $II_{1}$ factors with standard invariant $\cP^{+}$.

   To identify the isomorphism type of the $A_{k}$, we look at the semi-finite algebra  $$V_{+} = \bigoplus_{k + l + m \textrm{ even }} \cP^{+}_{k,l,m}$$ where $\cP^{+}_{k,l,m} = \cP^{+}_{\frac{k + l + m}{2}}$ and is spanned by boxes of the form
   $$
    \begin{tikzpicture} [baseline = 0 cm]
       \draw (-.5, -.5) -- (-.5, .5) -- (.5, .5) -- (.5, -.5) -- (-.5, -.5);
       \draw[ultra thick] (0, 1) -- (0, .5);
       \draw[ultra thick] (1, 0) -- (.5, 0);
       \draw[ultra thick] (-1, 0) -- (-.5, 0);
       \node at (0, 0) {$x$};
       \node at (-.75, .25) {$k$};
       \node at (.75, .25) {$l$};
       \node at (.25, .75) {$m$};
       \node at (-.5, -.7) {*};
       \end{tikzpicture} \, .
       $$
       The element $x$ above is identified with the following element of $\cP_{k+2p, l+2q, m}^{+}$:
       \begin{equation}
       \label{r}
  \delta^{-(p+q)/2}  \begin{tikzpicture}[baseline = -.5cm]
       \draw (-.5, -.5) -- (-.5, .5) -- (.5, .5) -- (.5, -.5) -- (-.5, -.5);
       \draw[ultra thick] (0, 1) -- (0, .5);
       \draw[ultra thick] (1, 0) -- (.5, 0);
       \draw[ultra thick] (-1, 0) -- (-.5, 0);
       \draw[thick] (-1,1)--(-1,-2)--(1, -2)--(1,1)--(-1,1);
       \draw (-1, -.5) arc(90:-90 : .2cm);
       \draw (-1, -1.5) arc(90:-90 : .2cm);
       \draw (1, -0.5) arc(90:270 : .2cm);
       \draw (1, -1.5) arc(90:270 : .2cm);
       \node at (0, 0) {$x$};
       \node at (-.75, .25) {$k$};
       \node at (.75, .25) {$l$};
       \node at (.25, .75) {$m$};
       \node at (-1, -2.2) {*};
       \node at (-.85, -1.1) {\vdots};
       \node at (.85, -1.1) {\vdots};
       \node at (-.7, -1.1) {$p$};
       \node at (.7, -1.1) {$q$};
       \node at (-.5, -.7) {*};
       \end{tikzpicture}
       \end{equation}
        where there are $p$ cups on the left and $q$ cups on the right.  Under these identifications, $V_{+}$ completes to a semifinite von Neumann algebra, $\cM_{+}$ where the multiplication is given by
        $$
        \left(\begin{tikzpicture} [baseline = 0cm]
       \draw (-.5, -.5) -- (-.5, .5) -- (.5, .5) -- (.5, -.5) -- (-.5, -.5);
       \draw[ultra thick] (0, 1) -- (0, .5);
       \draw[ultra thick] (1, 0) -- (.5, 0);
       \draw[ultra thick] (-1, 0) -- (-.5, 0);
       \node at (0, 0) {$x$};
       \node at (-.75, .25) {$k$};
       \node at (.75, .25) {$l$};
       \node at (.25, .75) {$m$};
       \node at (-.5, -.7) {*};
       \end{tikzpicture}\right) \cdot \left(\begin{tikzpicture}[baseline = 0cm]
       \draw (-.5, -.5) -- (-.5, .5) -- (.5, .5) -- (.5, -.5) -- (-.5, -.5);
       \draw[ultra thick] (0, 1) -- (0, .5);
       \draw[ultra thick] (1, 0) -- (.5, 0);
       \draw[ultra thick] (-1, 0) -- (-.5, 0);
       \node at (0, 0) {$y$};
       \node at (-.75, .25) {$k'$};
       \node at (.75, .25) {$l'$};
       \node at (.25, .75) {$m'$};
       \node at (-.5, -.7) {*};
       \end{tikzpicture}\right) = \delta_{l, k'} \begin{tikzpicture}[baseline = 0cm]
       \draw (-1.5, -.5) -- (-1.5, .5) -- (-.5, .5) -- (-.5, -.5) -- (-1.5, -.5);
       \draw[ultra thick] (-1, 1) -- (-1, .5);
       \draw[ultra thick] (0, 0) -- (-.5, 0);
       \draw[ultra thick] (-2, 0) -- (-1.5, 0);
       \node at (-1, 0) {$x$};
       \node at (-1.75, .25) {$k$};
       \node at (-.7, .75) {$n$};
       \node at (-1.5, -.7) {*};
       \draw (.5, -.5) -- (.5, .5) -- (1.5, .5) -- (1.5, -.5) -- (.5, -.5);
       \draw[ultra thick] (1, 1) -- (1, .5);
       \draw[ultra thick] (2, 0) -- (1.5, 0);
       \draw[ultra thick] (0, 0) -- (.5, 0);
       \node at (1, 0) {$y$};
       \node at (1.75, .25) {$l'$};
       \node at (1.30, .75) {$m'$};
       \node at (0, .3) {$l$};
       \node at (.5, -.7) {*};
       \draw (-2, -1) -- (2,-1)--(2,1)--(-2, 1)--(-2, -1);
       \node at (-2, -1.2) {*};
       \end{tikzpicture}
       $$
         where we have assumed that we have added enough cups as in diagram \eqref{r} so that $l$ and $k'$ are either the same or differ by 1.  The trace on $\cM_{+}$ is given by
         $$
         \Tr(x) = \begin{tikzpicture}[baseline=.5cm]
	\draw (.5,.5) --(-.5,.5) -- (-.5,-.5) -- (.5,-.5) -- (.5, .5) ;
    \draw (.7,2)--(-.7,2)--(-.7,1)--(.7, 1)--(.7,2);
\draw[ultra thick] (-.5, 0) arc (90:270: .5cm) -- (.5, -1) arc(-90:90: .5cm);
\draw[ultra thick] (0, .5) -- (0, 1);
\node at (0, 0) {$x$};
\node at (-.5, -.7) {*};
\node at (-.7, 2.1) {*};
\node at (0, 1.5) {$\sum TL$};
\node at (0, -.75) {$k$}; \end{tikzpicture}
$$
 provided that the number of strings on the left and right of $x$ have the same parity and is zero otherwise.  It is easy to check that the identification in diagram \eqref{r} respects both the trace and multiplication.

The algebras $A_{2k}$ above are a compression of $\cM_{+}$ by the projection $p_{2k}^{+}$ where for general $n$,
$$
p_{n}^{+} = \begin{tikzpicture}[baseline = 0cm]
       \draw[thick] (-.5, -.5) -- (-.5, .5) -- (.5, .5) -- (.5, -.5) -- (-.5, -.5);
       \draw[ultra thick] (-.5, 0) -- (.5, 0);
              \node at (0, .2) {$n$};
       \node at (-.5, -.7) {*};
       \end{tikzpicture}
       $$
        Similarly, we can consider a semifinite von Neumann algebra $\cM_{-}$ generated by the $\cP_{n}^{-}$'s (where the * is in a \emph{shaded} region and on the bottom of the box), and if we define projections $p_{n}^{-}$, then $A_{2k+1}$ is the compression of $\cM_{-}$ by $p_{2k+1}^{-}$.

        A diagrammatic argument shows that $\cM_{+}$ is generated by
        $$
        \cA_{+} = \displaystyle \left(\bigcup_{k, \ell} \cP^{+}_{k, \ell, 0}\right)^{''} \textrm{ and } X = \textrm{s}-\lim_{k \rightarrow \infty} \begin{tikzpicture}[baseline = 0cm]
       \draw[thick] (-.7, -.7) -- (-.7, .7) -- (.7, .7) -- (.7, -.7) -- (-.7, -.7);
       \draw (-.7, .4)--(0, .4) arc(-90:0 : .3cm);
       \draw[ultra thick] (-.7, 0) -- (.7, 0);
       \node at (0, -.3) {$2k$};
       \node at (-.7, -.9) {*};
       \node at (-.7, -.9) {*};
       \end{tikzpicture}
         +
        \begin{tikzpicture} [baseline = 0cm]
       \draw[thick] (-.7, -.7) -- (-.7, .7) -- (.7, .7) -- (.7, -.7) -- (-.7, -.7);
       \draw (.7, .4)--(0, .4) arc(-90:-180 : .3cm);
       \draw[ultra thick] (-.7, 0) -- (.7, 0);
       \node at (0, -.3) {$2k$};
       \node at (-.7, -.9) {*};
       \end{tikzpicture}
       $$
where the limit above is in the strong operator topology. This element is an $\cA_{+}$-valued semicircular element in the sense of \cite{MR1704661} and is used in the calculation of the isomorphism class of the algebras $A_{k}$

   \paragraph{The finite depth case:}  Let $\Gamma$ denote the principal graph of $\cP$ with edge set $E(\Gamma)$ and initial vertex *.  Let $\ell^{\infty}(\Gamma)$ as the von Neumann algebra of bounded functions on the vertices of $\Gamma$ and endow $\ell^{\infty}(\Gamma)$ with a trace $\tr$ such that $\tr(p_{v}) = \mu_{v}$, where $p_{v}$ is the delta function at $v$ and $\mu_{v}$ is the entry corresponding to a fixed Perron-Frobenius eigenvector for $\Gamma$ with $\mu_* = 1$.  From \cite{MR2807103}, $A_{0} = p_{*}\cM(\Gamma)p_{*}$ where $\cM(\Gamma)$ is the von Neumann algebra generated by $(\ell^{\I}(\Gamma), \tr)$ and $\ell^{\infty}(\Gamma)$-valued semicircular elements $\{X_{e}: e \in E(\Gamma)\}$ which are compressions of $X$ by partial isometries in $\cA_{+}$ and are free with amalgamation over $\ell^{\infty}(\Gamma)$.  Each $X_{e}$ is supported under $p_{v} + p_{w}$, where $e$ connects $v$ and $w$, and we have $X_{e} = p_{v}X_{e}p_{w} + p_{w}X_{e}p_{v}$.  Assuming that $\mu_v \geq \mu_w$, the scalar-valued distribution of $X_{e}^{2}$ in $(p_v + p_w)\cM(\Gamma)(p_v + p_w)$ is free-Poisson with an atom of size $\frac{\mu_v - \mu_w}{\mu_v + \mu_w}$ at 0.  Therefore,
  $$
  vN(\ell^{\infty}(\Gamma), X_{e}) = L(\Z) \otimes M_{2}(\C) \oplus \C \oplus \ell^{\I}(\Gamma \setminus \{v, w\})
  $$
   with $p_w = (1\otimes e_{1,1})\oplus 0\oplus 0$ and $p_v = (1\otimes e_{2,2})\oplus 1\oplus 0$, where $\{e_{i,j}: 1 \leq i,j \leq 2\}$ is a system of matrix units for $M_{2}(\C)$.  If $\Gamma$ is finite, Dykema's formulas for computing certain amalgamated free products \cite{MR1201693, MR2765550} show that $\cM(\Gamma)$ is an interpolated free group factor and the compression formula gives the result for $A_{0}$.  Since $A_{2n}$ is a $\delta^{2n}-$amplification of $A_{0}$, the result holds for $A_{2n}$.  The factor $A_{1}$ is a compression of $M(\Gamma^{*})$ with $\Gamma^{*}$ the dual principal graph of $\cP$.  Applying the same analysis to $\Gamma^{*}$ gives the formula for the $A_{2n+1}$'s.

   \paragraph{The infinite depth case:}

    We similarly define $\cM(\Gamma)$ for an arbitrary connected, loopless (not necessarily bipartite) graph $\Gamma$.  If $\Gamma$ is finite, we show that $\cM(\Gamma) \cong L(\F_{t}) \oplus A$ where $A$ is finite-dimensional and abelian ($A$ can possibly be $\{0\}$).  Furthermore, if $p_{\Gamma}$ is the identity of $L(\F_{t})$ and $\Gamma'$ is a finite graph containing $\Gamma$, then the inclusion $p_{\Gamma}\cM(\Gamma)p_{\Gamma} \rightarrow p_{\Gamma}\cM(\Gamma')p_{\Gamma}$ is a standard embedding of interpolated free group factors (see Definition \ref{defn:Dyk} and Remark \ref{rem:Dyk} below).  Therefore, if $\cP$ is infinite depth with principal graph $\Gamma$, we write $\Gamma$ as an increasing union of finite graphs $\Gamma_{k}$ where $\Gamma_{k}$ is $\Gamma$ truncated at depth $k$.  Since standard embeddings are preserved by cut-downs, the inclusion $p_{*}\cM(\Gamma_{k})p_{*} \rightarrow p_{*}\cM(\Gamma_{k+1})p_{*}$ is a standard embedding.  As $A_{0}$ is the inductive limit of the $p_{*}\cM(\Gamma_{k})p_{*}$'s, it is an interpolated free group factor where the parameter is the limit of the parameters for the $p_{*}\cM(\Gamma_{k})p_{*}$'s, which is $\I$.  Since the factors $A_{2k}$ are amplifications of $A_{0}$, $A_{2k} \cong L(\F_{\infty})$.  Applying the same analysis to $\Gamma^{*}$ (the dual principal graph of $\cP$) shows that $A_{2k+1} \cong L(\F_{\infty})$.

 \paragraph{Organization:}  Section \ref{sec:preliminaries} covers some preliminary material on interpolated free group factors, free dimension, and standard embeddings.  Section \ref{sec:VNGraph} introduces $\cM(\Gamma)$ and establishes both its structure and how it includes into $\cM(\Gamma')$ for $\Gamma$ a subgraph of $\Gamma'$.  Section \ref{sec:GJSInfinite} provides the proof that the factors $A_{k}$ above are all isomorphic to $L(\F_{\I})$.

\paragraph{Acknowledgements:}  The author would like to thank Arnaud Brothier, Vaughan Jones, David Penneys, and Dimitri Shlyakhtenko for their conversations and encouragement.  The author was supported by NSF Grant DMS-0856316 and DOD-DARPA grants HR0011-11-1-0001 and HR0011-12-1-0009.

%%%%%%%%%%%%%%%%%%%%%%%%%%%%%%%%%%%%%%%%%%%%%%%%%%
\section{Preliminaries}\label{sec:preliminaries}

Dykema \cite{MR1256179} and R\u{a}dulescu \cite{MR1258909} independently developed interpolated free group factors $L(\F_{t})$ for $1 < t \leq \I$.  These coincide with the usual free group factors when $t \in \N \cup \{\infty\}$ and they satisfy
$$
L(\F_{t})*L(\F_{s}) = L(\F_{s+t}) \textrm{ and } L(\F_{t})_{\gamma} = L(\F(1 + \gamma^{-2}(t-1))),
$$
where $M_{\gamma}$ is the $\gamma-$amplification of the $II_{1}$ factor $M$. It is known that either the interpolated free group factors are all isomorphic or they are pairwise non-isomorphic \cite{MR1256179, MR1258909}.

\begin{nota} \label{nota:vNA} Throughout this paper, we will be concerned with finite von Neumann algebras $(\cM, tr)$ which can be written in the form
$$
 \cM = \overset{p_{0}}{\underset{\gamma_{0}}{\cM_{0}}} \oplus \bigoplus_{j \in J} \overset{p_{j}}{\underset{\gamma_{j}}{L(\F_{t_{j}})}} \oplus  \bigoplus_{k \in K} \overset{q_{k}}{\underset{\alpha_{k}}{M_{n_{k}}}}
 $$
 where $\cM_{0}$ is a diffuse hyperfinite von Neumann algebra, $L(\F_{t_{j}})$ is an interpolated free group factor with parameter $t_{j}$, $M_{n_{k}}$ is the algebra of $n_{k} \times n_{k}$ matrices over the scalars, and the sets $J$ and $K$ are at most finite and countably infinite respectively.  We use $p_{j}$ to denote the projection in $L(\F_{t_{j}})$ corresponding to the identity of $L(\F_{t_{j}})$ and $q_{k}$ to denote a minimal projection in $M_{n_{k}}$. The projections $p_{j}$ and $q_{k}$ have traces $\gamma_{j}$ and $\alpha_{k}$ respectively.  Let $p_{0}$ be the identity in $\cM_{0}$ with trace $\gamma_{0}$.   We write $\overset{p,q}{M_{2}}$ to mean $M_{2}$ with a choice of minimal orthogonal projections $p$ and $q$. \end{nota}

If the interpolated free group factors turn out to be non-isomorphic, it is desirable to be able to calculate which interpolated free group factors appear in amalgamated free products.  To help facilitate this calculation, Dykema defined the notion of free dimension.  In general, one has
$$
\fdim(\cM_{1} \underset{D}* \cM_{2}) = \fdim(\cM_{1}) + \fdim(\cM_{2}) - \fdim(D)
$$
whenever $\cM_{1}$ and $\cM_{2}$ are of the form of Notation \ref{nota:vNA} and $D$ is finite dimensional \cite{MR1201693, MR1363079,MR2765550,arXiv:1110.5597v1}.  In general, for the algebra $\cM$ in Notation \ref{nota:vNA}, we have $$\fdim(\cM) = 1 + \sum_{j \in J}\gamma_{j}^{2}(t_{j} - 1) - \sum_{k \in K}\alpha_{k}^{2}.$$
Notice that this includes the special case $\fdim(L(\F_{t})) = t$.

Of course if the interpolated free group factors are isomorphic, then the free dimension is not well defined; however, the only purpose of the free dimension is to determine the parameter of interpolated free group factors which show up in amalgamated free products.  Therefore all the lemmas below will remain valid if all references to free dimension are removed.

Of critical importance will be the notion of a \emph{standard embedding} of interpolated free group factors \cite{MR1201693}.  This is a generalization of a mapping $\F_{n} \rightarrow \F_{m}$ for $m > n$ sending the $n$ generators of $\F_{n}$ onto $n$ of the $m$ generators for $\F_{m}$.

\begin{defn} \label{defn:Dyk}

Let $1 < r < s$ and $\phi : L(\F_{r}) \rightarrow L(\F_{s})$ be a von Neumann algebra homomorphism.  We say that $\phi$ is a standard embedding if there exist nonempty sets $S \subset S'$, a family of projections $\{p_{s'}: s' \in S'\}$ with $p_{s'} \in R$ (the hyperfinite $II_{1}$ factor), a free family $\{X^{s'}: s' \in S'\}$ of semicircular elements which are also free from $R$, and isomorphisms
 $$
 \alpha: L(\F_{r}) \rightarrow (R \cup \{p_{s}X^{s}p_{s}\}_{s \in S})'' \textrm{ and } \beta : L(\F_{s}) \rightarrow (R \cup \{p_{s'}X^{s'}p_{s'}\}_{s' \in S'})''
 $$
 such that $\phi = \beta^{-1} \circ \iota \circ \alpha$ where $\iota: (R \cup \{p_{s}X^{s}p_{s}\}_{s \in S})'' \rightarrow (R \cup \{p_{s'}X^{s'}p_{s'}\}_{s' \in S'})''$ is the canonical inclusion.  We will write $A \overset{s.e.}{\hookrightarrow} B$ to mean that the inclusion of $A$ into $B$ is a standard embedding.
\end{defn}

\begin{rem} \label{rem:Dyk} Dykema in \cite{MR1201693} and \cite{MR1363079} shows the following useful properties of standard embeddings which we will use extensively in this paper.

\begin{itemize}

\item[(1)] If $A$ is an interpolated free group factor, the canonical inclusion $A \rightarrow A * \cM$ is a standard embedding whenever $\cM$ is of the form in Notation \ref{nota:vNA}.
\item[(2)] A composite of standard embeddings is a standard embedding.
\item[(3)] If $A_{n} = L(\F_{s_{n}})$ with $s_{n} < s_{n+1}$ for all $n$ and $\phi_{n}: A_{n} \overset{s.e.}{\hookrightarrow} A_{n+1}$, then the inductive limit of the $A_{n}$ with respect to the $\phi_{n}$ is $L(\F_{s})$ where $s = \displaystyle \lim_{n \rightarrow \infty}s_{n}$.
\item[(4)] If $t > s$  then $\phi: L(\F_{s}) \overset{s.e.}{\hookrightarrow} L(\F_{t})$ if and only if for any nonzero projection $p \in L(\F_{s})$, $\phi|_{pL(\F_{s})p}: pL(\F_{s})p \overset{s.e.}{\hookrightarrow} \phi(p)L(\F_{t})\phi(p)$.

\end{itemize}

\end{rem}

Our work will rely heavily on the following two lemmas.
\begin{lem}  [\cite{arXiv:1110.5597v1}] \label{lem:DR1}

Let $\cN = (\overset{p}{M_{n}(\C)} \oplus B) \underset{D}{*} C$ and $\cM = (M_{n}(\C) \otimes A \oplus B) \underset{D}{*} C$ where $A$, $B$ and $C$ are finite von Neumann algebras and $D$ is a finite dimensional abelian von Neumann algebra.  Let $E$ be the trace-preserving conditional expectation of $\cM$ onto $D$.  Assume $p$ lies under a minimal projection in $D$ and $E|_{M_{n}(\C) \otimes A} = E|_{M_{n}(\C)} \otimes tr_{A}$.  Then $p\cM p = p\cN p * A$ and the central support of $p$ in $\cM$ is the same as that in $\cN$.

\end{lem}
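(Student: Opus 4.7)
My plan is to prove the corner identity by a two-step reduction: first compress by a minimal projection of $D$ to remove the amalgam, then apply an associativity identity for amalgamated free products together with the standard corner formula.

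As a first reduction, let $e \in D$ be the minimal projection with $p \leq e$. Because $D$ is finite dimensional and abelian, $e\cM e$ and $e\cN e$ are ordinary (unamalgamated) free products of the $e$-corners of the respective factors. The hypothesis $E|_{M_n(\C)\otimes A} = E|_{M_n(\C)}\otimes\tr_A$ forces $D \cap (M_n(\C)\otimes A) \subseteq M_n(\C)\otimes 1$; since $p \leq e$ and $p \in M_n(\C)$, we have $e \in M_n(\C)\otimes 1$. Setting $k = \mathrm{rank}(e)$ in $M_n(\C)$, we get $e(M_n(\C)\otimes A \oplus B)e = M_k(\C)\otimes A$ and $e(M_n(\C)\oplus B)e = M_k(\C)$, so $e\cM e = (M_k(\C)\otimes A) * eCe$ and $e\cN e = M_k(\C) * eCe$ as plain free products, with $p$ a minimal projection of $M_k(\C)$.

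The main step is to rewrite $e\cM e$ via associativity of amalgamated free products as
\[
e\cM e \;=\; (M_k(\C)\otimes A)\underset{M_k(\C)}{*}\bigl(M_k(\C) * eCe\bigr) \;=\; (M_k(\C)\otimes A)\underset{M_k(\C)}{*} e\cN e,
\]
identifying $M_k(\C) \hookrightarrow M_k(\C)\otimes A$ as $M_k(\C)\otimes 1$ and $M_k(\C) \hookrightarrow e\cN e$ as the first free factor. Once this is in place, I apply the corner formula for amalgamated free products over $M_k(\C)$: since $p \in M_k(\C)$ is minimal, $p M_k(\C) p = \C p$ and
\[
p\cM p \;=\; p(M_k(\C)\otimes A) p \,\underset{\C p}{*}\, p(e\cN e) p \;=\; A \mathbin{*} p\cN p,
\]
using $p(M_k(\C)\otimes A) p = (pM_k(\C)p)\otimes A = A$. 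The central support claim drops out of the same compression: the central support of $p$ in an amalgamated free product over $M_k(\C)$ is determined by the $M_k(\C)$-bimodule structure of the two factors near $p$, and freely adjoining $A$ with amalgamation over $M_k(\C)$ does not alter that bimodule structure, so the central support in $\cM$ agrees with the central support in $\cN$.

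I expect the associativity identification to be the main obstacle. It amounts to showing that $1\otimes A \subset M_k(\C)\otimes A$ is free from $e\cN e$ with amalgamation over $M_k(\C)$ inside $e\cM e$. This freeness is forced by the hypothesis on $E$: the factorization $E|_{M_n(\C)\otimes A} = E|_{M_n(\C)}\otimes \tr_A$ says that a centered element $1\otimes a$ (with $\tr_A(a)=0$) has vanishing $M_k(\C)$-valued conditional expectation, and alternating moments against centered elements of $e\cN e$ vanish by the moment formula for the original free product $(M_k(\C)\otimes A) * eCe$ combined with the tensor-product factorization of $E$.
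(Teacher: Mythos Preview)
This lemma is quoted from \cite{arXiv:1110.5597v1} without proof in the paper, so there is no in-paper argument to compare against. Your associativity idea is the right one, but the opening reduction fails. The claim that $e\cM e$ and $e\cN e$ are \emph{scalar} free products of the $e$-corners is false: minimality of $e$ in $D$ gives $eDe=\C e$, but $e$ is not central in the free factors, so a compressed alternating word $e\,x_1 y_1 x_2\cdots e$ need not lie in the algebra generated by the two corners. For instance, with $D=\C^2$ sitting diagonally in $X=Y=M_2(\C)$ one has $e_1 X e_1=e_1 Y e_1=\C$, yet $e_1(X *_D Y)e_1$ is infinite dimensional. There is also a secondary slip: from $p\le e$ and $p\in M_n(\C)$ you cannot deduce $e\in M_n(\C)\otimes 1$; the minimal projection $e\in D\subset M_n(\C)\oplus B$ may well have a nonzero $B$-component, so $e(M_n\otimes A\oplus B)e$ need not equal $M_k(\C)\otimes A$.

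The fix is to run associativity over $P:=M_n(\C)\oplus B$ \emph{before} any compression, and then compress by $p$ rather than $e$:
\[
\cM=\bigl(M_n(\C)\otimes A\oplus B\bigr)\underset{P}{*}\bigl(P\underset{D}{*}C\bigr)=\bigl(M_n(\C)\otimes A\oplus B\bigr)\underset{P}{*}\cN .
\]
Now $pPp=\C p$, $p(M_n\otimes A\oplus B)p=A$, and $A=pAp$; hence every word in $\cN$ and $A$ compresses to a word in $p\cN p$ and $A$, giving generation of $p\cM p$. Since $A\ominus\C\subset (M_n\otimes A\oplus B)\ominus P$ while $(p\cN p)\ominus\C p\subset\cN\ominus P$ (because $E_P$ restricted to $p\cN p$ is the trace), freeness with amalgamation over $P$ yields scalar freeness of $A$ and $p\cN p$ in $p\cM p$. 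Thus $p\cM p=p\cN p*A$, and the central-support statement follows because the only generators added in passing from $\cN$ to $\cM$ already live under $p$.
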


\begin{lem}  [\cite{arXiv:1110.5597v1}] \label{lem:DR2}

Let $\cN = (\overset{p}{\underset{\gamma}{M_{m}(\C)}} \oplus \overset{q}{\underset{\gamma}{M_{n-m}(\C)}} \oplus B) \underset{D}{*} C$ and $\cM = (\underset{\gamma}{M_{n}(\C)} \oplus B) \underset{D}{*} C$ with $B$, $C$, $D$ as in Lemma \ref{lem:DR1}.  Assume $p$ and $q$ sit under minimal projections in $D$ and $p$ is equivalent to $q$ in $\cN$.  Then $p\cM p = p\cN p * L(\Z)$ and the central support of $p$ in $\cM$ is the same as that in $\cN$.

\end{lem}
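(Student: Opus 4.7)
The plan is to produce, inside $p\cM p$, a Haar unitary $u$ that is $*$-free from $p\cN p$ and that together with $p\cN p$ generates all of $p\cM p$. Since $p$ and $q$ are equivalent in $\cN$, fix a partial isometry $v\in\cN$ with $v^{*}v=p$ and $vv^{*}=q$. Inside the matrix algebra $M_{n}(\C)\subset\cM$, the minimal projections $p$ and $q$ are also equivalent, so fix a second partial isometry $w\in M_{n}(\C)$ with $w^{*}w=p$ and $ww^{*}=q$. Then $u:=v^{*}w$ is a unitary in $p\cM p$, and because $w=vu$ recovers the inter-block matrix units of $M_{n}(\C)$ from $M_{m}(\C)\oplus M_{n-m}(\C)$ together with $u$, the algebra $p\cM p$ is generated by $p\cN p$ and $u$.

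The substance of the proof is establishing that $u$ is Haar and $*$-free from $p\cN p$. I would build an ambient test algebra $\tilde{\cM}$: start from $\cN$, freely adjoin (over the subalgebra $\C p\oplus\C(1-p)$) an element $u_{0}$ that is a Haar unitary with $u_{0}=pu_{0}p$, and let $\widetilde{M_{n}(\C)}$ be the von Neumann subalgebra generated by $M_{m}(\C)$, $M_{n-m}(\C)$, and the partial isometry $\tilde{w}:=vu_{0}$. The matrix-unit relations force $\widetilde{M_{n}(\C)}\cong M_{n}(\C)$. The key claim is that inside $\tilde{\cM}$, the subalgebras $\widetilde{M_{n}(\C)}\oplus B$ and $C$ remain free with amalgamation over $D$ with the trace prescribed in the definition of $\cM$. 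Granted this, the universal property of the amalgamated free product yields an isomorphism $\tilde{\cM}\cong\cM$ that is the identity on $\cN$ and sends $u_{0}$ to $u$; the Haar property and freeness of $u$ from $p\cN p$ then descend from the tautological properties of $u_{0}$ in $\tilde{\cM}$. For the central support, let $z$ denote the central support of $p$ in $\cN$; since $p\sim q$ in $\cN$ and $p\in M_{m}(\C)$, $q\in M_{n-m}(\C)$, one has $z\geq 1_{M_{m}(\C)}+1_{M_{n-m}(\C)}$, and the additional generators of $\cM$ beyond $\cN$ (the off-diagonal matrix units of $M_{n}(\C)$) lie under this sum and therefore commute with $z$, so $z$ is central in $\cM$ as well; conversely, any central projection of $\cM$ above $p$ restricts to a central projection of $\cN$ above $p$ and hence dominates $z$.

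The main obstacle is the freeness-with-amalgamation verification in the middle paragraph. One must show that alternating $D$-centered words drawn from $\widetilde{M_{n}(\C)}\oplus B$ and $C$ have vanishing conditional expectation onto $D$. Elements of $\widetilde{M_{n}(\C)}$ are spanned by monomials in matrix units for $M_{m}(\C)\oplus M_{n-m}(\C)$ interlaced with powers of $\tilde{w}=vu_{0}$ and its adjoint, so expanding any such alternating word rewrites it as an alternating word in $\cN$ and $C$ threaded with powers of $u_{0}$. Combining the original amalgamated freeness of $M_{m}(\C)\oplus M_{n-m}(\C)\oplus B$ with $C$ over $D$ (which defines $\cN$) with the freeness of $u_{0}$ from $\cN$ over $\C p\oplus\C(1-p)$ then allows one to collapse these expectations step by step. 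This bookkeeping is essentially the same combinatorial argument used by Dykema to handle $L(\Z)$-type perturbations of matrix blocks in amalgamated free products, and it is where the technical heavy lifting resides.
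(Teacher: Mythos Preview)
The paper does not prove this lemma; it is quoted from \cite{arXiv:1110.5597v1} without proof, so there is no in-paper argument to compare against. Your strategy --- manufacture a candidate Haar unitary $u=v^{*}w$ in $p\cM p$, then build an auxiliary model $\tilde{\cM}$ in which the freeness of $u_{0}$ from $p\cN p$ is tautological and identify $\tilde{\cM}$ with $\cM$ --- is exactly the approach of that reference, so in spirit you are reproducing the intended proof.

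Two corrections to the execution. First, the ``conversely'' step in your central-support argument is wrong as written: a projection central in $\cM$ need not lie in $\cN$ at all, so it cannot be said to ``restrict to a central projection of $\cN$''. The fix is simpler than what you wrote: central supports only increase under enlarging the ambient algebra, since
\[
z \;=\; \bigvee_{u\in U(\cN)} upu^{*} \;\le\; \bigvee_{u\in U(\cM)} upu^{*} \;=\; z_{\cM},
\]
and your forward argument already shows $z\in Z(\cM)$, hence $z_{\cM}\le z$.

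Second, in the freeness verification be careful that $v\in\cN=(M_{m}\oplus M_{n-m}\oplus B)*_{D}C$ will in general involve letters from $C$, so $\tilde{w}=vu_{0}$ and hence $\widetilde{M_{n}(\C)}$ are not supported on the ``left'' side of the amalgamated free product. Your phrase ``alternating word in $\cN$ and $C$'' is therefore imprecise (indeed $C\subset\cN$). What one actually checks is that a $D$-centered alternating word in $\widetilde{M_{n}(\C)}\oplus B$ and $C$, after substituting $\tilde{w}=vu_{0}$ and absorbing each $v^{\pm 1}$ into the neighbouring $\cN$-letters, becomes a word in $\cN$ interleaved with nonzero powers of $u_{0}$, to which the freeness of $u_{0}$ from $p\cN p$ over $\C p\oplus\C(1-p)$ then applies. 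That absorption step is where the Haar property of $u_{0}$ is genuinely used and is the bookkeeping you allude to.
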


 Note that if $A$, $B$ and $C$ are in the form in Notation \ref{nota:vNA}, and if $\cN$ is an interpolated free group factor, then the proofs of the above lemmas in \cite{arXiv:1110.5597v1} show that $p\cN p \rightarrow p\cM p$ of Lemmas \ref{lem:DR1} and \ref{lem:DR2} are standard embeddings.  This implies $\cN \overset{s.e.}{\hookrightarrow} \cM$ by Remark \ref{rem:Dyk}.

%%%%%%%%%%%%%%%%%%%%%%%%%%%%%%%%%%%%%%%%%%%%%%%%%%

%%%%%%%%%%%%%%%%%%%%%%%%%%%%%%%%%%%%%%%%%%%%%%%%%%
\section{A von Neumann algebra associated to a finite connected graph}\label{sec:VNGraph}

Let $\Gamma$ be a connected, loopless finite graph with edge set $E(\Gamma)$ and vertex set $V(\Gamma)$.  Assume further that each vertex $v \in V(\Gamma)$ is weighted by a real constant $\gamma_{v} > 0$ with $\sum_{v \in \Gamma} \gamma_{v} = 1$ (the weighting does \emph{not} have to be the Perron-Frobenius weighting).  Consider the abelian von Neumann algebra $\ell^{\infty}(\Gamma)$. Let $p_{v}$ be the indicator function on the vertex $v$ and its trace will be $\gamma_{v}$.  We construct a finite von Neumann algebra associated to $\Gamma$ (also see \cite{MR2772347}).

\begin{defn}\label{defn:vNGraph}
Let $\Gamma$ be as above, $e$ be an edge in $\Gamma$ connecting the vertices $v$ and $w$, and assume $\gamma_{v} \geq \gamma_{w}$.  Define
$$
\cA_{e} = \underset{2\gamma_{w}}{M_{2}(\C) \otimes L(\Z)} \oplus \CC_{\gamma_{v} - \gamma_{w}}^{p^{e}_{v}} \oplus \ell^{\infty}(\Gamma \setminus \{v, w\})
$$
 where the trace on $M_{2} \otimes L(\Z)$ is $\tr_{M_{2}} \otimes \tr_{L(\Z)}$.  $\cA_{e}$ includes $\ell^{\infty}(\Gamma)$ by setting \begin{align*}p_{w} &= 1 \otimes e_{1, 1} \oplus 0 \oplus 0 \textrm{ and } \\ p_{v} &= 1 \otimes e_{2, 2} \oplus 1 \oplus 0.\end{align*}   Therefore, the trace preserving conditional expectation $E_{e}: \cA_{e} \rightarrow \ell^{\infty}(\Gamma)$ has the property $E_{e}|_{M_{2} \otimes L(Z)} = E_{e}|_{M_{2}} \otimes \tr|_{L(\Z)}$. We define $\cM(\Gamma)$, the von Neumann algebra associated to $\Gamma$, by $$
 \cM(\Gamma) = \underset{{\ell^{\infty}(\Gamma)}}{*} (\cA_{e}, E_{e})_{e \in E(\Gamma)}.
 $$
\end{defn}

\begin{rem} If $\Gamma$ is an infinite graph with a weighting that is not $\ell^{1}$, then we can still define $M(\Gamma)$ as in \ref{defn:vNGraph} although it will be a semifinite algebra.  Given $e \in E(\Gamma)$ connecting vertices $v$ and $w$, the compressed algebra $(p_{v} + p_{w})\cA_{e}(p_{v} + p_{w})$ is still finite, and if $E_{e}: \cA_{e} \rightarrow \ell^{\infty}(\Gamma)$ is the (tracial-weight) preserving conditional expectation, then $E_{e}$ is clearly normal on $(p_{v} + p_{w})\cA_{e}(p_{v} + p_{w})$ and is the identity on $(1 - p_{v} - p_{w})\cA_{e}(1 - p_{v} - p_{w})$.  Therefore one can take the algebraic free product $Q$ of $(\cA_{e})_{e\in E(\Gamma)}$ with amalgamation over $\ell^{\I}(\Gamma)$ and represent it on $L^{2}(Q, Tr \circ \underset{\ell^{\infty}(\Gamma)}{*}E_{e})$ to obtain $\cM(\Gamma)$.

 \end{rem}

 \begin{defn} \label{defn:H2} Let $v, w \in V(\Gamma)$ We write $v \sim w$ if $v$ and $w$ are connected by at least 1 edge in $\Gamma$ and denote $n_{v, w}$ be the number of edges joining $v$ and $w$.  We set $\alpha^{\Gamma}_{v} = \sum_{w\sim v} n_{v, w}\gamma_{w}$, and define $B(\Gamma) = \{ v \in V(\Gamma) : \gamma_{v} > \alpha^{\Gamma}_{v}\}$.  \end{defn}

 For the rest of this section, we assume $\Gamma$ is finite. We show that $\cM(\Gamma)$ is the direct sum of an interpolated free group factor and a finite dimensional abelian algebra.  More precisely, we prove the following theorem:

\begin{thm} \label{thm:H1}

Let $\Gamma$ and $\Gamma'$ be connected, finite, loopless, and weighted graphs with at least 2 edges.  Then $\cM(\Gamma) \cong \overset{p^{\Gamma}}{L(\F_{t_{\Gamma}})} \oplus \underset{{v \in B(\Gamma)}}{\bigoplus} \overset{r_{v}^{\Gamma}}{\underset{\gamma_{v} - \alpha^{\Gamma}_{v}}{\C}}$ where $r_{v}^{\Gamma} \leq p_{v}$ and $t_{\Gamma}$ is such that this algebra has the appropriate free dimension.  Furthermore, if $\Gamma$ is a subgraph of $\Gamma'$, then $p^{\Gamma}\cM(\Gamma)p^{\Gamma} \overset{s.e.}{\hookrightarrow} p^{\Gamma}\cM(\Gamma')p^{\Gamma}$.

\end{thm}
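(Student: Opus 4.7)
The plan is to prove Theorem~\ref{thm:H1} by induction on $n = |E(\Gamma)|$, using associativity of the amalgamated free product together with Lemmas~\ref{lem:DR1} and \ref{lem:DR2}. The base case $n = 2$ involves only two connected loopless weighted graphs: the path $P_3$ on three vertices, and the two-vertex graph with a pair of parallel edges. In each I write $\cM(\Gamma) = \cA_{e_1} *_{\ell^{\infty}(\Gamma)} \cA_{e_2}$, use Lemma~\ref{lem:DR1} to pull the $L(\Z)$ factor out of each $M_2 \otimes L(\Z)$ block, and apply Lemma~\ref{lem:DR2} to combine the remaining matrix pieces sitting under the shared minimal projections of $\ell^{\infty}(\Gamma)$. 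Tracking traces recovers $L(\F_{t_{\Gamma}}) \oplus \bigoplus_{u \in B(\Gamma)} \C_u$, with $t_{\Gamma}$ read off from the free dimension formula $\fdim(\cM_1 *_D \cM_2) = \fdim(\cM_1) + \fdim(\cM_2) - \fdim(D)$.

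For the inductive step, assume the result for graphs with $n \geq 2$ edges and let $\Gamma'$ have $n+1$ edges. Pick an edge $e$ with endpoints $v, w$ and $\gamma_v \geq \gamma_w$. If $\Gamma'$ contains a cycle I choose $e$ to lie on one, so that $\Gamma := \Gamma' \setminus e$ is connected with $n$ edges and the same vertex set; if $\Gamma'$ is a tree I let $w$ be a leaf, let $\Gamma$ be the connected subtree on $V(\Gamma') \setminus \{w\}$, and extend $\cM(\Gamma)$ by the summand $\C p_w$ of trace $\gamma_w$ so that $\ell^{\infty}(\Gamma')$ embeds into both factors. Either way,
\[
\cM(\Gamma') = \cM(\Gamma) *_{\ell^{\infty}(\Gamma')} \cA_e.
\]
By the inductive hypothesis $\cM(\Gamma) \cong L(\F_{t_{\Gamma}}) \oplus \bigoplus_{u \in B(\Gamma)} \C_{\gamma_u - \alpha_u^{\Gamma}}$ (plus the extra $\C p_w$ in the tree case). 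Applying Lemma~\ref{lem:DR1} to extract the $L(\Z)$ of $\cA_e$ and Lemma~\ref{lem:DR2} to merge the $M_2$ block with the summands of $\cM(\Gamma)$ meeting $p_v$ and $p_w$ yields the decomposition $\cM(\Gamma') \cong L(\F_{t_{\Gamma'}}) \oplus \bigoplus_{u \in B(\Gamma')} \C_{\gamma_u - \alpha_u^{\Gamma'}}$; the atomic trace at $v$ (respectively $w$) shrinks by $\gamma_w$ (respectively $\gamma_v$) because $\alpha_v^{\Gamma'} = \alpha_v^{\Gamma} + \gamma_w$ and $\alpha_w^{\Gamma'} = \alpha_w^{\Gamma} + \gamma_v$, and $t_{\Gamma'}$ is pinned down by the free dimension formula.

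The standard embedding claim follows from the remark at the end of Section~\ref{sec:preliminaries}: each single-edge enlargement $\cM(\Gamma) \hookrightarrow \cM(\Gamma')$ induces a standard embedding of the compressed interpolated free group factors. For an arbitrary subgraph $\Gamma \subset \Gamma'$ I order the edges of $E(\Gamma') \setminus E(\Gamma)$, add them one at a time, and compose the resulting chain of standard embeddings by Remark~\ref{rem:Dyk}(2), using Remark~\ref{rem:Dyk}(4) to restrict to the corner $p^{\Gamma}$.

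The main obstacle will be the combinatorial bookkeeping in the inductive step: ensuring that the minimal-projection and equivalence hypotheses of Lemma~\ref{lem:DR2} hold at each step, and handling the threshold behaviour as a vertex $v$ leaves $B(\Gamma)$ when $\alpha_v^{\Gamma}$ overtakes $\gamma_v$. Specifically, when $0 < \gamma_v - \alpha_v^{\Gamma} < \gamma_w$ the atomic piece $r_v^{\Gamma}$ is only partially absorbed by the new edge, so the $M_2$ block of $\cA_e$ must be split into portions matching the atomic and free-group-factor subprojections of $p_v$ separately, with Lemma~\ref{lem:DR1} applied to the residual $\C$ summand and Lemma~\ref{lem:DR2} to the portion glued to $L(\F_{t_{\Gamma}})$.
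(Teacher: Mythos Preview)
Your plan matches the paper's structure: induction on $|E(\Gamma)|$, the same two base cases, and the same two flavours of inductive step (adding an edge between existing vertices, or adding a pendant vertex with an edge). The paper packages these as Lemmas~\ref{lem:H1}, \ref{lem:H2}, and \ref{lem:H3}, and assembles them exactly as you describe.

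Where your sketch diverges is precisely at the obstacle you flag, and your proposed fix does not quite close it. Lemma~\ref{lem:DR2} requires the two minimal projections being merged to already be \emph{equivalent} in the smaller algebra $\cN$, i.e.\ to share a central support there. If you try to apply Lemma~\ref{lem:DR2} directly to the off-diagonal matrix unit of $M_2$ linking $p_w$ to the $M_2$-subprojection of $p_v$, those two projections need not be equivalent in $\cM(\Gamma)*_D(\C\oplus\C\oplus\cdots)$: each may straddle the free-group-factor summand and a residual atom $r_w^\Gamma$ or $r_v^\Gamma$, and the central supports do not match. Splitting the $M_2$ block along the atomic/free decomposition of $p_v$ alone does not repair this, because the atoms under $p_v$ and under $p_w$ have unrelated traces in general, so no single splitting of $M_2$ is compatible with both sides. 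The paper instead subdivides $M_2\otimes L(\Z)$ into $n$ copies of $M_2$ (via projections in $L(\Z)$) with $n$ chosen so that each piece $p_{v,k}$, $p_{w,k}$ has trace strictly smaller than the factor parts of $p_v$, $p_w$; a preliminary free product with the abelian algebra recording just these small projections (the steps $\cM(\Gamma)\subset\cN_1\subset\cN_2$ in Lemma~\ref{lem:H2}) forces all the $p_{v,k}$, $p_{w,k}$ into the interpolated free group factor summand, where equal trace gives equivalence automatically. Only then is Lemma~\ref{lem:DR2} applied, one $k$ at a time.

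A smaller slip: in the tree case you take $w$ to be a leaf while also having fixed $\gamma_v\geq\gamma_w$; a leaf may well carry larger weight than its neighbour, so both orientations must be handled (the paper's Lemma~\ref{lem:H3} does this in two separate cases).
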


 Notice that since we are assuming that all vertices have positive weight, it follows that $p_{v}p^{\Gamma}\neq 0$ for all $v \in \Gamma$.  We prove Theorem \ref{thm:H1} in a series of lemmas.

\begin{lem} \label{lem:H1}

Let $\Gamma$ be a finite, connected, weighted, loopless graph with 2 edges.  Then $M(\Gamma)$ is of the form in Theorem \ref{thm:H1}.

\end{lem}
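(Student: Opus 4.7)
The plan is to split into the two combinatorial possibilities for a connected, loopless graph $\Gamma$ with exactly two edges: either (i) $\Gamma$ is a path $v_{1} \overset{e_{1}}{-} v_{2} \overset{e_{2}}{-} v_{3}$ on three vertices, or (ii) $\Gamma$ consists of two vertices $v, w$ joined by parallel edges $e_{1}, e_{2}$. By Definition \ref{defn:vNGraph}, in either case $\cM(\Gamma) = \cA_{e_{1}} \underset{\ell^{\infty}(\Gamma)}{*} \cA_{e_{2}}$, where each $\cA_{e_{i}}$ has the form $(M_{2}(\C) \otimes L(\Z)) \oplus C_{i}$ with $C_{i}$ a finite-dimensional abelian von Neumann algebra possibly containing a single atom $p^{e_{i}}_{v}$ under the heavier endpoint $v$ of $e_{i}$ of trace $\gamma_{v} - \gamma_{w}$.

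The main tool is Lemma \ref{lem:DR1}: applied to each $\cA_{e_{i}}$ in turn, it lets one extract both copies of $L(\Z)$ attached to the $M_{2}$-blocks. After this extraction, what remains is an amalgamated free product of two finite-dimensional algebras over the finite-dimensional abelian algebra $\ell^{\infty}(\Gamma)$; by Dykema's results such amalgamated free products decompose as a direct sum of an interpolated free group factor and a finite-dimensional abelian algebra. Re-attaching the two copies of $L(\Z)$ via Lemma \ref{lem:DR1} preserves this form and merely enlarges the free group parameter. In case (ii), Lemma \ref{lem:DR2} provides a more symmetric computation, since $e_{1}$ and $e_{2}$ contribute isomorphic $M_{2}$-blocks over the common pair of projections $\{p_{v}, p_{w}\}$.

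The bookkeeping to finish the proof is to show that the surviving atomic part is precisely $\bigoplus_{v \in B(\Gamma)} \overset{r^{\Gamma}_{v}}{\underset{\gamma_{v} - \alpha^{\Gamma}_{v}}{\C}}$. The guiding principle is that, in the amalgamated free product, any atomic projection sitting under $p_{v}$ in $\cA_{e_{i}}$ is comparable to a sub-projection of the $M_{2} \otimes L(\Z)$ summand of the other edge incident to $v$: when $\gamma_{v} \leq \alpha^{\Gamma}_{v}$ there is enough room in that summand to absorb the atom into the free group factor, while when $\gamma_{v} > \alpha^{\Gamma}_{v}$ exactly $\gamma_{v} - \alpha^{\Gamma}_{v}$ units of trace remain to form the claimed atom. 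The free group parameter $t_{\Gamma}$ is then forced by the free dimension identity $\fdim(\cA_{e_{1}} \underset{\ell^{\infty}(\Gamma)}{*} \cA_{e_{2}}) = \fdim(\cA_{e_{1}}) + \fdim(\cA_{e_{2}}) - \fdim(\ell^{\infty}(\Gamma))$. The main obstacle in carrying this out is the case analysis: one must split according to the ordering of the weights at the endpoints of the two edges and, in each subcase, carefully apply Lemmas \ref{lem:DR1} and \ref{lem:DR2} to track which atoms survive and which are absorbed.
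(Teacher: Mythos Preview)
Your overall strategy---split into the two combinatorial shapes, strip the $L(\Z)$ factors off the $M_{2}\otimes L(\Z)$ blocks via Lemma~\ref{lem:DR1}, compute the resulting finite-dimensional amalgamated free product, then re-attach the $L(\Z)$'s---is exactly the route the paper takes. The paper implements this via an explicit chain $\cN_{0}\subset\cN_{1}\subset\cN_{2}\subset\cN_{3}\subset\cM(\Gamma)$ in each case, and the case split by weight ordering you anticipate is precisely what appears there.

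There is, however, a genuine error in your middle step. You assert that after removing the two copies of $L(\Z)$, ``by Dykema's results such amalgamated free products decompose as a direct sum of an interpolated free group factor and a finite-dimensional abelian algebra.'' This is false. In the three-vertex path with $\gamma_{1}>\gamma_{2}$ and $\gamma_{3}\geq\gamma_{2}$, for instance, the stripped algebra $(M_{2}\oplus\C\oplus\C)\underset{D}{*}(\C\oplus M_{2}\oplus\C)$ is $\C\oplus M_{3}\oplus\C$---purely finite-dimensional, with a matrix summand and no interpolated free group factor at all. Similarly, in the two-vertex case the paper's intermediate $\cN_{1}$ is $M_{3}\otimes L(\Z)\oplus M_{2}$. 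So your sentence ``re-attaching the two copies of $L(\Z)$ via Lemma~\ref{lem:DR1} preserves this form and merely enlarges the free group parameter'' has nothing to act on: there is no free group factor yet whose parameter could be enlarged. The free group factor only \emph{appears} when the $L(\Z)$'s are put back, because compressing at a suitable projection then gives (finite-dimensional)${}*L(\Z)$, and one must separately check (as the paper does, tracking central supports case by case) that this swallows all the matrix pieces and leaves exactly the abelian atoms predicted by $B(\Gamma)$. Your sketch skips this, and the heuristic you offer for the atom bookkeeping (``enough room \ldots to absorb the atom'') does not substitute for the actual computation.
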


\begin{proof}

 Set $D = \ell^{\infty}(\Gamma)$.  There are two overlying cases to consider.  One where $\Gamma$ has 2 vertices and the other where $\Gamma$ has 3 vertices.

   \emph{\underline{Case 1}}: Assume that $\Gamma$ has 2 vertices $v, w$ and 2 edges $e_{1}$ and $e_{2}$ connecting $v$ and $w$ and without loss of generality assume $\gamma_{v} \geq \gamma_{w}$.  We obtain the desired formula for $\cM(\Gamma)$ by examining the following sequence of inclusions:

 \begin{align*}
 \cN_{0} &= \left(\overset{p_{w}}{\underset{\gamma_{w}}{\C }}\oplus \overset{p_{v}^{I}}{\underset{\gamma_{w}}{\C}} \oplus \underset{\gamma_{v} - \gamma_{w}}{\C}\right) \underset{D}* \left(\overset{p_{w}}{\underset{\gamma_{w}}{\C }}\oplus \overset{q_{v}^{I}}{\underset{\gamma_{w}}{\C}} \oplus \underset{\gamma_{v} - \gamma_{w}}{\C}\right)\\
 \cap&\\
     \cN_{1} &= \left(\overset{p_{w}, p_{v}^{I}}{\underset{\gamma_{w}}{M_{2}}} \oplus \underset{\gamma_{v} - \gamma_{w}}{\C}\right) \underset{D}{*} \left(\overset{p_{w}}{\underset{\gamma_{w}}{\C }}\oplus \overset{q_{v}^{I}}{\underset{\gamma_{w}}{\C}} \oplus \underset{\gamma_{v} - \gamma_{w}}{\C}\right)\\
      \cap&\\
       \cN_{2} &= \left(\overset{p_{w}, p_{v}^{I}}{\underset{\gamma_{w}}{M_{2}}} \oplus \underset{\gamma_{v} - \gamma_{w}}{\C}\right) \underset{D}{*} \left(\overset{p_{w}, q_{v}^{I}}{\underset{\gamma_{w}}{M_{2}}} \oplus \underset{\gamma_{v} - \gamma_{w}}{\C}\right)\\
        \cap&\\
        \cN_{3} &= \left(\underset{2\gamma_{w}}{M_{2} \otimes L(\Z)} \oplus \underset{\gamma_{v} - \gamma_{w}}{\C}\right) \underset{D}{*} \left(\overset{p_{w}}{\underset{\gamma_{w}}{M_{2}}} \oplus \underset{\gamma_{v} - \gamma_{w}}{\C}\right)\\
         \cap&\\
          \cM(\Gamma) &= \left(\underset{2\gamma_{w}}{M_{2} \otimes L(\Z)} \oplus \underset{\gamma_{v} - \gamma_{w}}{\C}\right) \underset{D}{*} \left(\underset{2\gamma_{w}}{M_{2} \otimes L(\Z)} \oplus \underset{\gamma_{v} - \gamma_{w}}{\C}\right),
          \end{align*}
        where $p_{v}$ decomposes as $(1 \otimes e_{2, 2}) \oplus 1$ in $\cA_{e_{1}}$ and $\cA_{e_{2}}$ with $p_{v}^{I} = 1 \otimes e_{2,2}$ in $\cA_{e_{1}}$ and $q_{v}^{I} = 1 \otimes e_{2,2}$ in $\cA_{e_{2}}$.   From Lemma \ref{lem:DR1} and \cite{MR1201693}, we see that
        $$
        p_{v}\cN_{0}p_{v} = \overset{p_{v}^{I}}{\underset{\frac{\gamma_{w}}{\gamma{v}}}{\C}} \oplus \underset{\frac{\gamma_{v} - \gamma_{w}}{\gamma{v}}}{\C}*\overset{q_{v}^{I}}{\underset{\frac{\gamma_{w}}{\gamma{v}}}{\C}} \oplus \underset{\frac{\gamma_{v} - \gamma_{w}}{\gamma{v}}}{\C} = \begin{cases} \underset{2\frac{\gamma_{v} - \gamma_{w}}{\gamma_{v}}}{M_{2} \otimes L(\Z)} \oplus \overset{p_{v}^{I} \wedge q_{v}^{I}}{\underset{\frac{2\gamma_{w} - \gamma_{v}}{\gamma_{v}}}{\C}} & \textrm{if } 2\gamma_{w} \geq\gamma_{v}\\ \underset{\frac{2\gamma_{w}}{\gamma_{v}}}{M_{2} \otimes L(\Z)} \oplus \overset{(p_{v} - p_{v}^{I})\wedge(p_{v} - q_{v}^{I})}{\underset{\frac{\gamma_{v} - 2\gamma_{w}}{\gamma_{v}}}{\C}} & \textrm{if }\gamma_{v} > 2\gamma_{w} \end{cases}
        $$
        where in the first algebra, the identity element copy of $\C$ is $p_{v}^{I} \wedge q_{v}^{I}$ and and in the second algebra, the identity of the copy of $\C$ is orthogonal to both $p_{v}^{I}$ and $q_{v}^{I}$.

     \emph{\underline{Case 1a}}:  Assume $2\gamma_{w} \geq \gamma_{v}$.  As $p_{v}\wedge q_{v}$ is minimal and central in $\cN_{0}$, we see that
      $$
      \cN_{1} = \underset{3(\gamma_{v} - \gamma_{w})}{M_{3}\otimes L(\Z)} \oplus \underset{2\gamma_{w} - \gamma_{v}}{\overset{p_{v}^{I} \wedge q_{v}^{I}}{M_{2}}}.
      $$
       By \cite{MR1201693}, the projections $p_{v}^{I}$ and $q_{v}^{I}$ are equivalent in $\cN_{0}$, so it follows that $p_{w}$ is equivalent to $q_{v}^{I}$ in $\cN_{1}$.  Therefore by Lemma \ref{lem:DR2},
        $$
        p_{w}\cN_{2}p_{w} = p_{w}\cN_{1}p_{w} * L(\Z) = (\underset{\frac{\gamma_{v} - \gamma_{w}}{\gamma_{w}}}{L(\Z)} \oplus \underset{\frac{2\gamma_{w} - \gamma_{v}}{\gamma_{w}}}{\C}) * L(\Z),
        $$
         which is an interpolated free group factor $L(\F_{t})$ \cite{MR1201693}.  As the central support of $p_{w}$ in $\cN_{2}$ is 1, it follows that $\cN_{2}$ is also an interpolated free group factor $L(\F_{t_{1}})$.  To finish up this case, we write
         \begin{align*}
         \cN_{2} \subset \cN_{3} &= \left(\underset{2\gamma_{w}}{M_{2} \otimes L(\Z)} \oplus \underset{\gamma_{v} - \gamma_{w}}{\C}\right) \underset{D}{*} \left(\overset{p_{w}}{\underset{\gamma_{w}}{M_{2}}} \oplus \underset{\gamma_{v} - \gamma_{w}}{\C}\right)  \textrm{ and }\\
      \cN_{3} \subset \cM(\Gamma) &= \left(\underset{2\gamma_{w}}{M_{2} \otimes L(\Z)} \oplus \underset{\gamma_{v} - \gamma_{w}}{\C}\right) \underset{D}{*} \left(\underset{2\gamma_{w}}{M_{2} \otimes L(\Z)} \oplus \underset{\gamma_{v} - \gamma_{w}}{\C}\right),
      \end{align*}
       and use Lemma \ref{lem:DR1} twice, as well as the amplification formula for interpolated free group factors to obtain that $\cM(\Gamma)$ is an interpolated free group factor.

      \emph{\underline{Case 1b}}: The case $\gamma_{v} > 2\gamma_{w}$ for $\cN_{0}$ is treated exactly the same as the first with only the caveat that the central support of $p_{w}$ in $\cN_{1}$ is a projection of trace $3\gamma_{w}$, so $\cN_{1}$, and thus $\cN_{2}$, $\cN_{3}$, and $\cM(\Gamma)$, have a minimal central projection of trace $\gamma_{v} - 2\gamma_{w}$.\\

 \emph{\underline{Case 2}}: Next we consider the case where $\Gamma$ has 3 vertices $v_{1}$, $v_{2}$, and $v_{3}$ with weights $\gamma_{1}$, $\gamma_{2}$, and $\gamma_{3}$ respectively, where $v_{2}$ is connected to $v_{1}$ by $e_{1}$ and to $v_{3}$ by $e_{2}$.  There are two sub-cases to consider.  The first is when $\gamma_{2} \geq \gamma_{1} \geq \gamma_{3}$, and the second is when $\gamma_{1} > \gamma_{2}$ and $\gamma_{1} \geq \gamma_{3}$.

 \emph{\underline{Case 2a}}: We examine the following sequence of inclusions:
  \begin{align*}
  \cN_{0} &= \left(\overset{p_{v_{1}}}{\underset{\gamma_{1}}{\C}} \oplus \overset{p^{I}_{2}}{\underset{\gamma_{1}}{\C}} \oplus \overset{p^{II}_{2}}{\underset{\gamma_{2} - \gamma_{1}}{\C}} \oplus \overset{p_{v_{3}}}{\underset{\gamma_{3}}{\C}}\right) \underset{D}{*} \left(\overset{p_{v_{1}}}{\underset{\gamma_{1}}{\C}} \oplus \overset{q^{I}_{2}}{\underset{\gamma_{2} - \gamma_{3}}{\C}} \oplus \overset{q^{II}_{2}}{\underset{\gamma_{3}}{\C}} \oplus \overset{p_{v_{3}}}{\underset{\gamma_{3}}{\C}}\right)\\
  \cap& \\
  \cN_{1} &= \left(\overset{p_{v_{1}}, p_{2}^{I}}{\underset{\gamma_{1}}{M_{2}}} \oplus \overset{p_{2}^{II}}{\underset{\gamma_{2} - \gamma_{1}}{\C}} \oplus \overset{p_{v_{3}}}{\underset{\gamma_{3}}{\C}}\right) \underset{D}{*} \left(\overset{p_{v_{1}}}{\underset{\gamma_{1}}{\C}} \oplus \overset{q_{2}^{I}}{\underset{\gamma_{2} - \gamma_{3}}{\C}} \oplus \overset{q_{2}^{II},p_{v_{3}}}{\underset{\gamma_{3}}{M_{2}}}\right)\\
  \cap &\\
  \cN_{2} &= \left(\underset{2\gamma_{1}}{M_{2}\otimes L(\Z)} \oplus \overset{p_{2}^{II}}{\underset{\gamma_{2} - \gamma_{1}}{\C}} \oplus \overset{p_{v_{3}}}{\underset{\gamma_{3}}{\C}}\right) \underset{D}{*} \left(\overset{p_{v_{1}}}{\underset{\gamma_{1}}{\C}} \oplus \overset{q_{2}^{I}}{\underset{\gamma_{2} - \gamma_{3}}{\C}} \oplus \overset{q_{2}^{II}, p_{v_{3}}}{\underset{\gamma_{3}}{M_{2}}}\right)\\
  \cap &\\
   \cM(\Gamma) &= \left(\underset{\gamma_{1}}{M_{2}\otimes L(\Z)} \oplus \overset{p_{2}^{II}}{\underset{\gamma_{2} - \gamma_{1}}{\C}} \oplus \overset{p_{v_{3}}}{\underset{\gamma_{3}}{\C}}\right) \underset{D}{*} \left(\overset{p_{v_{1}}}{\underset{\gamma_{1}}{\C}} \oplus \overset{q_{2}^{I}}{\underset{\gamma_{2} - \gamma_{3}}{\C}} \oplus \underset{2\gamma_{3}}{M_{2}\otimes L(\Z)}\right),
  \end{align*}
  where $p_{v_{2}}$ decomposes as $1 \otimes e_{22} \oplus 1 \oplus 0$ in $\cA_{e_{1}}$ and $0 \oplus 1 \oplus 1 \otimes e_{1, 1}$ in $\cA_{e_{2}}$.  We set $p_{2}^{I}$ and $p_{2}^{II}$ as the summands of $p_{v_{2}}$ supported in the diffuse and atomic parts of $\cA_{e_{1}}$ respectively and $q_{2}^{I}$ and $q_{2}^{II}$ as the summands of $p_{v_{2}}$ supported in the atomic and diffuse parts of $\cA_{e_{2}}$ respectively.  As above,
    $$
    p_{v_{2}}\cN_{0}p_{v_{2}} = \overset{p^{I}_{2}}{\underset{\frac{\gamma_{1}}{\gamma_{2}}}{\C}} \oplus \overset{p^{II}_{2}}{\underset{\frac{\gamma_{2} - \gamma_{1}}{\gamma_{2}}}{\C}} * \overset{q^{I}_{2}}{\underset{\frac{\gamma_{2} - \gamma_{3}}{\gamma_{2}}}{\C}} \oplus \overset{q^{II}_{2}}{\underset{\frac{\gamma_{3}}{\gamma_{2}}}{\C}} = \begin{cases} \underset{2\frac{\gamma_{2} - \gamma_{1}}{\gamma_{2}}}{M_{2} \otimes L(\Z)} \oplus \overset{p_{2}^{I} \wedge q_{2}^{I}}{\underset{\frac{\gamma_{1} - \gamma_{3}}{\gamma_{2}}}{\C}} \oplus \overset{p_{2}^{I} \wedge q_{2}^{II}}{\underset{\frac{\gamma_{1} - \gamma_{2} +  \gamma_{3}}{\gamma_{2}}}{\C}} & \textrm{ if } \gamma_{2} \leq \gamma_{1} + \gamma_{3} \\ \underset{2\frac{\gamma_{3}}{\gamma_{2}}}{M_{2} \otimes L(\Z)} \oplus \overset{p_{2}^{I} \wedge q_{2}^{I}}{\underset{\frac{\gamma_{1} - \gamma_{3}}{\gamma_{2}}}{\C}} \oplus \overset{p_{2}^{II} \wedge q_{2}^{I}}{\underset{\frac{\gamma_{2} - \gamma_{1} -  \gamma_{3}}{\gamma_{2}}}{\C}} & \textrm{ if } \gamma_{2} > \gamma_{1} + \gamma_{3} \end{cases}.
    $$

 \emph{\underline{Case 2a(i)}}: Assume $\gamma_{2} \leq \gamma_{1} + \gamma_{3}$.  Since the two new matrix units in $\cN_{1}$ introduce equivalences between $p_{v_{1}}$ and $p_{2}^{I}$ and between $q_{2}^{II}$ and $p_{v_{3}}$ respectively, we see that $\cN_{1}$ has the same number of summands as $p_{v_{2}}\cN_{0}p_{v_{2}}$, but with suitable amplifications.  Explicitly, we find that
   $$
   \cN_{1} = \underset{4(\gamma_{2} - \gamma_{1})}{M_{4}\otimes L(\Z)} \oplus \overset{p_{2}^{I}\wedge q_{2}^{I}}{\underset{\gamma_{1} - \gamma_{3}}{M_{2}}} \oplus \overset{p_{2}^{I} \wedge q_{2}^{II}}{\underset{\gamma_{1} + \gamma_{3} - \gamma_{2}}{M_{3}}}
   $$
    where the central support of $p_{v_{1}}$ is the identity.  By applying Lemma \ref{lem:DR1} and applying the same reasoning as case 1,
    we see that $\cN_{2}$ is an interpolated free group factor.  Applying Lemma \ref{lem:DR1} again shows that $\cM(\Gamma)$ is an interpolated free group factor.

 \emph{\underline{Case 2a(ii)}}: Assume $\gamma_{2} > \gamma_{1} + \gamma_{3}$.  This case is treated in the same way as above except that in $\cN_{1}$, $q_{2}^{I} \wedge p_{2}^{II}$ with trace $\gamma_{2} - \gamma_{3} - \gamma_{1}$ is minimal and central, so it is minimal and central in $\cM(\Gamma)$.

 \emph{\underline{Case 2b}}: Now let $\gamma_{1}$ be the largest weight. First assume $\gamma_{3} \geq \gamma_{2}$.  We consider the algebra
 $$
 \cN_{1} = \left(\overset{p_{1}^{I}}{\underset{\gamma_{1} - \gamma_{2}}{\C}}  \oplus \overset{p_{v_{2}}}{\underset{\gamma_{2}}{M_{2}}} \oplus \overset{p_{v_{3}}}{\underset{\gamma_{3}}{\C}}\right) \underset{D}{*} \left(\overset{p_{v_{1}}}{\underset{\gamma_{1}}{\C}} \oplus \overset{p_{v_{2}}}{\underset{\gamma_{2}}{M_{2}}} \oplus \overset{p_{3}^{I}}{\underset{\gamma_{3} - \gamma_{2}}{\C}}\right),
 $$
 where the projections orthogonal to $p_{v_{2}}$ in each copy of $M_{2}$ sit under $p_{i}$ and $p_{i}^{I} \leq p_{v_{i}}$ for $i = 1$ or 3.  It follows that $\cN_{1} = \overset{p_{1}^{I}}{\underset{\gamma_{1} - \gamma_{2}}{\C}} \oplus \overset{p_{v_{2}}}{\underset{\gamma_{2}}{M_{3}}} \oplus \overset{p_{3}^{I}}{\underset{\gamma_{3} - \gamma_{2}}{\C}}$, so tensoring each copy of $M_{2}$ with $L(\Z)$ and using the standard arguments as above show that
 $$
 \cM(\Gamma)  = \overset{p_{1}^{I}}{\underset{\gamma_{1} - \gamma_{2}}{\C}} \oplus \underset{3\gamma_{2}}{L(\F_{t})} \oplus \overset{p_{3}^{I}}{\underset{\gamma_{3} - \gamma_{2}}{\C}}.
 $$
 Finally, if $\gamma_{2} > \gamma_{3}$ then we consider
 $$
 \cN_{1} = \left(\overset{p_{1}^{I}}{\underset{\gamma_{1} - \gamma_{2}}{\C}}  \oplus \overset{p_{v_{2}}}{\underset{\gamma_{2}}{M_{2}}} \oplus \overset{p_{v_{3}}}{\underset{\gamma_{3}}{\C}}\right) \underset{D}{*} \left(\overset{p_{v_{1}}}{\underset{\gamma_{1}}{\C}} \oplus \overset{p_{2}^{I}}{\underset{\gamma_{2} - \gamma_{3}}{\C}} \oplus \overset{p_{v_{3}}}{\underset{\gamma_{3}}{M_{2}}}\right) = \overset{p_{1}^{I}}{\underset{\gamma_{1} - \gamma_{2}}{\C}} \oplus \underset{\gamma_{3}}{M_{3}} \oplus \underset{\gamma_{2} - \gamma_{3}}{M_{2}},
 $$
 where the central support of $p_{v_{2}}$ is $1 - p_{1}^{I}$.  Therefore, tensoring each copy of $M_{2}$ with $L(\Z)$ gives $\cM(\Gamma) = \overset{p_{1}^{I}}{\underset{\gamma_{1} - \gamma_{2}}{\C}} \oplus \underset{2\gamma_{2} + \gamma_{3}}{L(\F_{t})}$ as desired.
\end{proof}

We now inductively assume that for some $\Gamma$, $\cM(\Gamma)$ has the form as described in Theorem \ref{thm:H1}.\\

\begin{lem} \label{lem:H2}

  Suppose $\Gamma'$ is a graph obtained from $\Gamma$ by adding an edge $e$ connecting two vertices $v$ and $w$ of $\Gamma$ (so that in particular $\Gamma$ and $\Gamma'$ have the same underlying set of vertices with the same weighting).  Assume that
  $$
  \cM(\Gamma) = \overset{p^{\Gamma}}{L(\F_{t_{\Gamma}})} \oplus \underset{{v \in B(\Gamma)}}{\bigoplus} \overset{r_{v}^{\Gamma}}{\underset{\gamma_{v} - \alpha^{\Gamma}_{v}}{\C}}
  $$
  as in Theorem \ref{thm:H1}.  Then
  $$
  \cM(\Gamma') = \overset{p^{\Gamma'}}{L(\F_{t_{\Gamma'}})} \oplus \underset{{v \in B(\Gamma')}}{\bigoplus} \overset{r_{v}^{\Gamma'}}{\underset{\gamma_{v} - \alpha^{\Gamma'}_{v}}{\C}}
  $$
  where $p^{\Gamma} \leq p^{\Gamma'}$, $r_{v}^{\Gamma'} \leq r_{v}^{\Gamma}$, and $p^{\Gamma}\cM(\Gamma)p^{\Gamma} \overset{s.e.}{\hookrightarrow} p_{\Gamma}\cM(\Gamma')p_{\Gamma}$.

\end{lem}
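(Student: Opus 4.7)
The plan is to write $\cM(\Gamma') = \cM(\Gamma) \underset{D}{*} \cA_e$ with $D = \ell^\infty(\Gamma)$, and then filter $\cA_e$ through a chain of intermediate subalgebras, mirroring the chain-of-inclusions technique used in the proof of Lemma \ref{lem:H1}. Assume without loss of generality $\gamma_v \geq \gamma_w$, and let $p_v^I := 1 \otimes e_{2,2} \leq p_v$ be the rank-$1$ projection sitting in the $M_2 \otimes L(\Z)$ summand of $\cA_e$, which has trace $\gamma_w$. Set
\begin{align*}
\cA_e^{(0)} &= \C^{p_w} \oplus \C^{p_v^I} \oplus \C^{p_v - p_v^I} \oplus \ell^\infty(\Gamma \setminus \{v,w\}), \\
\cA_e^{(1)} &= M_2^{p_w,\, p_v^I} \oplus \C^{p_v - p_v^I} \oplus \ell^\infty(\Gamma \setminus \{v,w\}), \\
\cA_e^{(2)} &= \cA_e,
\end{align*}
and form $\cN_i := \cM(\Gamma) \underset{D}{*} \cA_e^{(i)}$ for $i = 0, 1, 2$, so that $\cM(\Gamma) \subset \cN_0 \subset \cN_1 \subset \cN_2 = \cM(\Gamma')$.

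The passage $\cM(\Gamma) \subset \cN_0$ adjoins a projection $p_v^I \leq p_v$ of trace $\gamma_w$ in free position (over $\C p_v$) from $p_v \cM(\Gamma) p_v$. Using the inductive description $p_v \cM(\Gamma) p_v = (p_v - r_v^\Gamma) L(\F_{t_\Gamma}) (p_v - r_v^\Gamma) \oplus \C r_v^\Gamma$ (with $r_v^\Gamma = 0$ if $v \notin B(\Gamma)$) together with Dykema's free-projection computation from \cite{MR1201693} and Remark \ref{rem:Dyk}(1), the free-group-factor part of $\cN_0$ absorbs a piece of $r_v^\Gamma$ of trace $\min(\gamma_v - \alpha_v^\Gamma,\, \gamma_w)$, leaving a residual atom of trace $\gamma_v - \alpha_v^\Gamma - \gamma_w$ whenever this is positive; the induced map on free-group-factor corners is a standard embedding. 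The passage $\cN_0 \subset \cN_1$ fuses the summands $\C^{p_w}$ and $\C^{p_v^I}$ into $M_2$: a short case-check shows $p_w \sim p_v^I$ in $\cN_0$ (they have equal trace and both lie under the free-group-factor identity), so Lemma \ref{lem:DR2} applies with $B = \C^{p_v - p_v^I} \oplus \ell^\infty(\Gamma \setminus \{v,w\})$ and $C = \cM(\Gamma)$. The same analysis applied symmetrically at $p_w$ absorbs any excess of $r_w^\Gamma$ up to trace $\gamma_v$. Finally, the passage $\cN_1 \subset \cN_2$ tensors the $M_2$-corner with $L(\Z)$; this is exactly the setting of Lemma \ref{lem:DR1}. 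By the remark following Lemma \ref{lem:DR2}, each of the three inclusions gives a standard embedding on the free-group-factor corner.

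Assembling the three standard embeddings via Remark \ref{rem:Dyk}(2), we obtain $p^\Gamma \cM(\Gamma) p^\Gamma \overset{s.e.}{\hookrightarrow} p^\Gamma \cM(\Gamma') p^\Gamma$. Tracking atoms step-by-step shows $r_v^{\Gamma'} \leq r_v^\Gamma$, $r_w^{\Gamma'} \leq r_w^\Gamma$, and $r_u^{\Gamma'} = r_u^\Gamma$ for $u \notin \{v,w\}$ (since $\alpha_u^{\Gamma'} = \alpha_u^\Gamma$ there); in particular $p^\Gamma \leq p^{\Gamma'}$. Free-dimension additivity then pins down $t_{\Gamma'}$.

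The main obstacle is the first step. Unlike the passages $\cN_0 \subset \cN_1 \subset \cN_2$, which apply Lemmas \ref{lem:DR2} and \ref{lem:DR1} off-the-shelf, the computation $\cM(\Gamma) \subset \cN_0$ requires a direct free-projection argument inside an atomic-plus-diffuse algebra, with a case analysis depending on whether $\gamma_v - \alpha_v^\Gamma$ exceeds $\gamma_w$ (i.e.\ whether $v$ stays in $B(\Gamma')$ or leaves it). This branching parallels the subcases in the proof of Lemma \ref{lem:H1}, and the verification that each branch yields a standard embedding uses the remark after Lemma \ref{lem:DR2} and the compatibility of standard embeddings with cut-downs in Remark \ref{rem:Dyk}(4).
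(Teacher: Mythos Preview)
Your chain $\cN_0\subset\cN_1\subset\cN_2$ is the right shape, but the passage $\cN_0\subset\cN_1$ has a real gap: the hypothesis of Lemma~\ref{lem:DR2}, namely $p_w\sim p_v^I$ in $\cN_0$, need not hold. Your $\cA_e^{(0)}$ agrees with $D$ at $p_w$, so $\cN_0$ is generated over $\cM(\Gamma)$ by the single projection $p_v^I\leq p_v$; in particular $r_w^\Gamma$ remains a minimal central projection of $\cN_0$ whenever $w\in B(\Gamma)$, and then $p_w$ does \emph{not} lie under the free-group-factor identity. Symmetrically, in $p_v\cN_0 p_v=\big(L(\F_{t_v})\oplus\C^{r_v^\Gamma}\big)\ast\big(\C^{p_v^I}\oplus\C^{p_v-p_v^I}\big)$ there is, in addition to the atom you track, a second central atom $r_v^\Gamma\wedge p_v^I$ of trace $\max(0,\gamma_w-\alpha_v^\Gamma)/\gamma_v$; when this is nonzero, $p_v^I$ also fails to lie under the factor identity. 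In either situation $p_w$ and $p_v^I$ sit over different minimal central summands of $\cN_0$ and cannot be Murray--von~Neumann equivalent there, so Lemma~\ref{lem:DR2} is not available. Your sentence ``the same analysis applied symmetrically at $p_w$'' has no step in the chain to attach to: nothing in $\cA_e^{(0)}$ touches $p_w$.

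The paper's proof handles exactly this obstruction by an $n$-fold subdivision. One splits $p_w$ and the diffuse part of $p_v$ into $n$ orthogonal pieces $p_{w,k},p_{v,k}$ each of trace $\gamma_w/n$, with $n$ large enough that $\tfrac{1}{n}+\tfrac{\gamma_w-\alpha_w^\Gamma}{\gamma_w}<1$ and $\tfrac{\gamma_w}{n\gamma_v}+\tfrac{\gamma_v-\alpha_v^\Gamma}{\gamma_v}<1$. The first inequality makes $p_w\cN_2 p_w\cong L(\Z/n\Z)\ast p_w\cN_1 p_w$ an interpolated free group factor (absorbing $r_w^\Gamma$), and the second forces $r_v^\Gamma\wedge p_{v,k}=0$ for every $k$. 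With these choices all $p_{w,k}$ and $p_{v,k}$ genuinely lie in the factor summand of $\cN_2$ and are equivalent there, so Lemma~\ref{lem:DR2} applies $n$ times to pass from $\cN_2$ to $\cN_3$, and Lemma~\ref{lem:DR1} applies $n$ times to reach $\cM(\Gamma')$. Your three-step chain is essentially the $n=1$ case of this, which only goes through when $v,w\notin B(\Gamma)$ and $\gamma_w\le\alpha_v^\Gamma$; for the general statement you need the subdivision.
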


\begin{proof}

  We use the convention that if the term $\overset{p}{\underset{\alpha}{\C}}$ appears where $\alpha \leq 0$ then this term is identically zero.  All parts of the proof below are valid if this modification is made.

  Set $D = \ell^{\infty}(\Gamma') = \ell^{\infty}(\Gamma)$ and let the new edge $e$ connect $v$ to $w$ with $\gamma_{v} \geq \gamma_{w}$.  We examine the following sequence of inclusions:

\begin{align*}
\cM(\Gamma) \subset \cN_{1} &= \cM(\Gamma) \underset{D}{*} \left(\overset{p_{w}}{\underset{\gamma_{w}}{\C}} \oplus \left(\bigoplus_{k=1}^{n} \overset{p_{v, k}}{\underset{\gamma_{w}/n}{\C}} \oplus \overset{p_{v}^{I}}{\underset{\gamma_{v} - \gamma_{w}}{\C}}\right) \oplus \ell^{\I}(\Gamma \setminus \{v, w\})\right)\\
\cap &\\
\cN_{2} &= \cM(\Gamma) \underset{D}{*} \left(\bigoplus_{k=1}^{n} \overset{p_{w, k}}{\underset{\gamma_{w}/n}{\C}} \oplus  \left(\bigoplus_{k=1}^{n} \overset{p_{v, k}}{\underset{\gamma_{w}/n}{\C}} \oplus \overset{p_{v}^{I}}{\underset{\gamma_{v} - \gamma_{w}}{\C}}\right) \oplus \ell^{\I}(\Gamma \setminus \{v, w\})\right)\\
\cap &\\
\cN_{3} &= \cM(\Gamma) \underset{D}{*} \left(\bigoplus_{k=1}^{n}\overset{p_{w, k}, p_{v, k}}{\underset{\gamma_{w}/n}{M_{2}}} \oplus \overset{p_{v}^{I}}{\underset{\gamma_{v} - \gamma_{w}}{\C}} \oplus \ell^{\I}(\Gamma \setminus \{v, w\})\right)\\
\cap &\\
\cM(\Gamma') &= \cM(\Gamma) \underset{D}{*} \left(L(\Z) \otimes M_{2} \oplus \overset{p_{v}^{I}}{\underset{\gamma_{v} - \gamma_{w}}{\C}} \oplus \ell^{\I}(\Gamma \setminus \{v, w\})\right).
\end{align*}
The projections $p_{w, k}$ are an orthogonal family with trace $\gamma_{w}/n$ in $\cA_{e}$ whose sum is $p_{w}$.  The projection $p_{v}$ decomposes as $\sum_{k=1}^{n}p_{v, k} + p_{v}^{I}$ with  $p_{v}^{I}$ supported in the atomic part of $\cA_{e}$ and the $p_{v, k}$ are an orthogonal family of projections with trace $\gamma_{w}/n$ supported in the diffuse part of $\cA_{e}$. The positive integer $n$ is chosen large enough such that $\frac{1}{n} + \frac{\gamma_{w} - \alpha^{\Gamma}_{w}}{\gamma_{w}} < 1$ and $\frac{\gamma_{w}}{n\gamma_{v}} + \frac{\gamma_{v} - \alpha^{\Gamma}_{v}}{\gamma_{v}} < 1$.  From the induction hypothesis,
$$
p_{v}\cM(\Gamma)p_{v} = \overset{p^{\Gamma}_{v}}{L(\F_{t_{v}})} \oplus \overset{r_{v}^{\Gamma}}{\underset{\frac{\gamma_{v} - \alpha_{v}}{\gamma_{v}}}{\C}}, \textrm{ and }p_{w}\cM(\Gamma)p_{w} = \overset{p^{\Gamma}_{w}}{L(\F_{t_{w}})} \oplus \overset{r_{w}^{\Gamma}}{\underset{\frac{\gamma_{w} - \alpha_{w}}{\gamma_{w}}}{\C}},
$$
with $p^{\Gamma}_{u} = p^{\Gamma}p_{u}$ for any vertex $u$.   From Lemma \ref{lem:DR1},
$$
p_{v}\cN_{1}p_{v} = \left(\overset{p^{\Gamma}_{v}}{L(\F_{t_{v}})} \oplus \overset{r_{v}^{\Gamma}}{\underset{\frac{\gamma_{v} - \alpha^{\Gamma}_{v}}{\gamma_{v}}}{\C}}\right) * \left(\bigoplus_{k=1}^{n} \overset{p_{v, k}}{\underset{\frac{\gamma_{w}}{n\gamma_{v}}}{\C}} \oplus \overset{p_{v}^{I}}{\underset{\frac{\gamma_{v} - \gamma_{w}}{\gamma_{v}}}{\C}}\right) = L(\F_{t_{v,1}}) \oplus \overset{p_{v}^{I}\wedge r_{v}^{\Gamma}}{\underset{\frac{\gamma_{v} - \alpha^{\Gamma'}_{v}}{\gamma_{v}}}{\C}}.
$$
 Lemma \ref{lem:DR1} applied to the inclusion
 $$
 \left(\overset{p^{\Gamma}_{v}}{\C} \oplus \overset{r_{v}^{\Gamma}}{\underset{\frac{\gamma_{v} - \alpha_{v}}{\gamma_{v}}}{\C}}\right) * \left(\bigoplus_{k=1}^{n} \overset{p_{v, k}}{\underset{\frac{\gamma_{w}}{n\gamma_{v}}}{\C}} \oplus \overset{p_{v}^{I}}{\underset{\frac{\gamma_{v} - \gamma_{w}}{\gamma_{v}}}{\C}}\right) \rightarrow \left(\overset{p^{\Gamma}_{v}}{L(\F_{t_{v}})} \oplus \overset{r_{v}^{\Gamma}}{\underset{\frac{\gamma_{v} - \alpha_{v}}{\gamma_{v}}}{\C}}\right) * \left(\bigoplus_{k=1}^{n} \overset{p_{v, k}}{\underset{\frac{\gamma_{w}}{n\gamma_{v}}}{\C}} \oplus \overset{p_{v}^{I}}{\underset{\frac{\gamma_{v} - \gamma_{w}}{\gamma_{v}}}{\C}}\right),
 $$
  shows that the inclusion $\displaystyle L(\F_{t_{v}}) = p^{\Gamma}_{v}\cM(\Gamma)p^{\Gamma}_{v} \rightarrow p^{\Gamma}_{v}\cN_{1}p^{\Gamma}_{v}$
  is equivalent to the canonical inclusion
  $$
   L(\F_{t_{v}}) \rightarrow L(\F_{t_{v}}) * p^{\Gamma}_{v}\left[\left(\overset{p'_{v}}{\C} \oplus \underset{\frac{\gamma_{v} - \alpha_{v}}{\gamma_{v}}}{\C}\right) * \left( \bigoplus_{k=1}^{n} \overset{p_{v, k}}{\underset{\frac{\gamma_{w}}{n\gamma_{v}}}{\C}} \oplus \overset{p_{v}^{I}}{\underset{\frac{\gamma_{v} - \gamma_{w}}{\gamma_{v}}}{\C}}\right)\right]p^{\Gamma}_{v}
   $$
so $p^{\Gamma}_{v}\cM(\Gamma)p^{\Gamma}_{v}\overset{s.e.}{\hookrightarrow} p^{\Gamma}_{v}\cN_{1}p^{\Gamma}_{v}$.  From Remark \ref{rem:Dyk}, $p^{\Gamma}\cM(\Gamma)p^{\Gamma} \overset{s.e.}{\hookrightarrow} p^{\Gamma}\cN_{1}p^{\Gamma}$ as well.

  By Lemma \ref{lem:DR1} we have
$$
p_{w}\cN_{2}p_{w} = L(\Z/n\Z) * p_{w}\cN_{1}p_{w} = L(\Z/n\Z) * \left(\overset{p_{w}^{\cN_{1}}}{L(\F_{t_{w}})} \oplus \overset{r_{w}^{\cN_{1}}}{\underset{\gamma_{w}}{\C}}\right),
$$
 where $p_{w}^{\cN_{1}} = p_{w}p^{\cN_{1}}$ with $p^{\cN_{1}}$ the central support of $p_{\Gamma}$ in $\cN_{1}$ (note $p_{w}^{\cN_{1}} \geq p_{w}^{\Gamma}$ so $r_{w}^{\cN_{1}} \leq r_{w}^{\Gamma}$ which implies $\delta_{w} \leq \gamma_{w} - \alpha^{\Gamma}_{w}$).  From these observations, it follows that $p_{w}\cN_{2}p_{w}$ is an interpolated free group factor (since $n$ was chosen such that $\frac{\gamma_{w}}{n\gamma_{v}} + \frac{\gamma_{v} - \alpha_{v}}{\gamma_{v}} < 1$) and the arguments used in the inclusion $\cM(\Gamma) \rightarrow \cN_{1}$ imply $p_{w}^{\cN_{1}}\cN_{1}p_{w}^{\cN_{1}} \overset{s.e.}{\hookrightarrow} p_{w}^{\cN_{1}}\cN_{2}p_{w}^{\cN_{1}}$. Therefore $p_{w}^{\Gamma}\cN_{1}p_{w}^{\Gamma} \overset{s.e.}{\hookrightarrow} p_{w}^{\Gamma}\cN_{2}p_{w}^{\Gamma}$ so $p^{\Gamma}\cN_{1}p^{\Gamma} \overset{s.e.}{\hookrightarrow} p^{\Gamma}\cN_{2}p^{\Gamma}$.  Also, observe that since the projections $p_{v, k}$ and $p_{w, k}$ lie in the interpolated free group factor summand of $\cN_{2}$, they are equivalent in $\cN_{2}$. We now define algebras $\cN_{2, j}$ for $j = 0,...,n$ so that

$$
\cN_{2} = \cN_{2, 0} \subset \cN_{2, 1} \subset \cN_{2, 2} \subset \cdots \subset \cN_{2, n} = \cN_{3} \textrm{ where }
$$ $$
\cN_{2, j} = \left(\bigoplus_{k=j+1}^{n}  \overset{p_{w, k}}{\underset{\gamma_{w}/n}{\C}} \oplus \bigoplus_{k=1}^{j} \overset{p_{w, k}, p_{v, k}}{\underset{\gamma_{w}/n}{M_{2}}} \oplus \bigoplus_{k=j+1}^{n}\overset{p_{v, k}}{\underset{\gamma_{w}/n}{\C}} \oplus \overset{p_{v}^{I}}{\underset{\gamma_{v} - \gamma_{w}}{\C}} \oplus \ell^{\I}(\Gamma \setminus \{v, w\})\right) \underset{D}{*} \cM(\Gamma).
$$

Let $p^{\cN_{2}}$ be the central support of $p_\Gamma$ in $\cN_{2}$.  Applying Lemma \ref{lem:DR2} to the inclusion
$$
p_{w, j+1}\cN_{2, j}p_{w, j+1} \rightarrow p_{w, j+1}\cN_{2,j+1}p_{w, j+1} = p_{w, j+1}\cN_{2, j}p_{w, j+1} * L(\Z)
$$
shows that this inclusion is a standard embedding, so it follows from Remark \ref{rem:Dyk} that $p^{\cN_{2}}\cN_{2, j}p^{\cN_{2}} \overset{s.e.}{\hookrightarrow} p^{\cN_{2}}\cN_{2, j+1}p^{\cN_{2}}$, implying $p^{\Gamma}\cN_{2, j}p^{\Gamma} \overset{s.e.}{\hookrightarrow} p^{\Gamma}\cN_{2, j+1}p^{\Gamma}$ for all $j$.  Inductively,
$$
\cN_{3} = \left(\overset{p^{\cN_{2}}}{L(\F_{t_{3}})} \oplus \overset{p_{v}^{I}\wedge r_{v}^{\Gamma}}{\underset{\gamma_{v} - \alpha_{v}^{\Gamma'}}{\C}} \bigoplus_{u \in L(\Gamma)\setminus\{v, w\}} \underset{\gamma_{u} - \alpha^{\Gamma'}_{u}}{\overset{r^{\Gamma}_{u}}{\C}}\right)
$$
and $p^{\Gamma}\cN_{2}p^{\Gamma} \overset{s.e.}{\hookrightarrow} p^{\Gamma}\cN_{3}p^{\Gamma}$.  To finish, we look at the sequence of algebras
$$
\cN_{3} = \cN_{3, 0} \subset \cN_{3, 1} \subset ... \subset \cN_{3, n} = \cM(\Gamma') \textrm{ where }
$$ $$\cN_{3, j} = \left(\bigoplus_{k=1}^{j} \overset{p_{w, k} + q_{w, k}}{M_{2} \otimes L(\Z)} \oplus \bigoplus_{k=j+1}^{n} \overset{p_{w, k}, p_{v, k}}{M_{2}} \oplus \overset{p_{v}^{I}}{\underset{\gamma_{v} - \gamma_{w}}{\C}} \bigoplus \ell^{\infty}(\Gamma \setminus \{v, w\})\right) \underset{D}{*} \cM(\Gamma).
$$
Lemma \ref{lem:DR1} implies that the inclusion
$$
p_{w, j+1}\cN_{3, j}p_{w, j+1} \rightarrow p_{w, j+1}\cN_{3, j+1}p_{w, j+1} = p_{w, j+1}\cN_{3, j}p_{w, j+1} * L(\Z)
$$
is a standard embedding, so by Remark \ref{rem:Dyk}, $p^{\cN_{2}}\cN_{3, j}p^{\cN_{2}} \overset{s.e.}{\hookrightarrow} p^{\cN_{2}}\cN_{3, j+1}p^{\cN_{2}}$ and thus $p^{\Gamma}\cN_{3, j}p^{\Gamma} \overset{s.e.}{\hookrightarrow} p^{\Gamma}\cN_{3, j+1}p^{\Gamma}$.  Therefore the inclusion $p^{\Gamma}\cN_{3}p^{\Gamma} \rightarrow p^{\Gamma}\cM(\Gamma')p^{\Gamma}$ is standard since it is a composite of standard embeddings.  This implies $\cM(\Gamma')$ has the desired formula and $p^{\Gamma}\cM(\Gamma)p^{\Gamma} \overset{s.e.}{\hookrightarrow} p^{\Gamma}\cM(\Gamma')p^{\Gamma}$.\end{proof}
We again assume that $\cM(\Gamma)$ is in the form of Theorem \ref{thm:H1}.

\begin{lem} \label{lem:H3}

 Let $\Gamma'$ be a weighted graph obtained from $\Gamma$ by adding a vertex $v$ and an edge $e$ connecting $v$ to $w \in V(\Gamma)$ woth weighting $\gamma_{v}$, and assume $\cM(\Gamma) = \overset{p^{\Gamma}}{L(\F_{t_{\Gamma}})} \oplus \underset{{v \in B(\Gamma)}}{\bigoplus} \overset{r_{v}^{\Gamma}}{\underset{\gamma_{v} - \alpha^{\Gamma}_{v}}{\C}}$ with notation as in Theorem \ref{thm:H1}.  Then
 $$
 \cM(\Gamma') = \overset{p^{\Gamma'}}{L(\F_{t_{\Gamma'}})} \oplus \underset{{u \in B(\Gamma')}}{\bigoplus} \overset{r_{u}^{\Gamma'}}{\underset{\gamma_{u} - \alpha^{\Gamma'}_{u}}{\C}}
 $$ where $p^{\Gamma} \leq p^{\Gamma'}$, $r_{u}^{\Gamma'} \leq r_{u}^{\Gamma}$ for all $u$, and $p_{\Gamma}\cM(\Gamma)p_{\Gamma} \overset{s.e.}{\hookrightarrow} p_{\Gamma}\cM(\Gamma')p_{\Gamma}$.
\end{lem}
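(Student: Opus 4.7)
The plan is to mimic the proof of Lemma \ref{lem:H2}, adapting it to the addition of a new vertex. First, I would rewrite $\cM(\Gamma')$ as an amalgamated free product involving $\cM(\Gamma)$: for each $e' \in E(\Gamma)$, the algebra $\cA_{e'}$ has identity $1 - p_v$ in $\ell^\infty(\Gamma')$, so extend it by adjoining the direct summand $\C p_v$ (a fresh central projection of trace $\gamma_v$). Since $p_v$ is central and shared across all these extensions, amalgamating over $\ell^\infty(\Gamma')$ yields $\cM(\Gamma) \oplus \C p_v$, and by associativity of amalgamated free products
\[
\cM(\Gamma') = (\cM(\Gamma) \oplus \C p_v) \underset{\ell^\infty(\Gamma')}{*} \cA_e.
\]
Since $v \notin V(\Gamma)$, we have $p^\Gamma p_v = 0$, so $p^\Gamma (\cM(\Gamma) \oplus \C p_v) p^\Gamma = p^\Gamma \cM(\Gamma) p^\Gamma = L(\F_{t_\Gamma})$.

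Next, assume without loss of generality $\gamma_v \geq \gamma_w$ (the other case is symmetric). Choose $n$ large enough that the analogues of the bounds used in Lemma \ref{lem:H2} hold, and write $p_w = \sum_{k=1}^n p_{w,k}$ and $p_v = \sum_{k=1}^n p_{v,k} + p_v^I$ with $p_{w,k}, p_{v,k}$ of trace $\gamma_w/n$ and $p_v^I$ of trace $\gamma_v - \gamma_w$ inside $\cA_e$. Form the chain $\cN_0 = \cM(\Gamma) \oplus \C p_v \subset \cN_1 \subset \cN_2 \subset \cN_3 \subset \cM(\Gamma')$, where each $\cN_j$ is $\cN_0$ amalgamated over $\ell^\infty(\Gamma')$ with a successively finer approximation to $\cA_e$: $\cN_1$ uses the fully discretized version with $p_v$ already split, $\cN_2$ also splits $p_w = \sum p_{w,k}$, $\cN_3$ joins each pair $p_{w,k}, p_{v,k}$ into an $M_2$ block, and the final inclusion tensors each $M_2$ with $L(\Z)$. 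Apply Lemmas \ref{lem:DR1} and \ref{lem:DR2} at the appropriate steps in exactly the pattern of Lemma \ref{lem:H2}; each inclusion restricts on the $p^\Gamma$-corner to a standard embedding, and composing these via Remark \ref{rem:Dyk}(2) yields $p^\Gamma \cM(\Gamma) p^\Gamma \overset{s.e.}{\hookrightarrow} p^\Gamma \cM(\Gamma') p^\Gamma$. The chief simplification relative to Lemma \ref{lem:H2} is that $p_v \cN_0 p_v = \C$, since $v$ is a new vertex with no prior structure, trivializing the computations on the $v$-side.

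The main obstacle is the bookkeeping to check that the final decomposition of $\cM(\Gamma')$ matches the claimed form. Using $\alpha_w^{\Gamma'} = \alpha_w^\Gamma + \gamma_v$, $\alpha_v^{\Gamma'} = \gamma_w$, and $\alpha_u^{\Gamma'} = \alpha_u^\Gamma$ for $u \in V(\Gamma) \setminus \{w\}$, one must verify that the atomic summand at $w$ in $\cM(\Gamma')$ shrinks from trace $\gamma_w - \alpha_w^\Gamma$ to $\gamma_w - \alpha_w^{\Gamma'}$ (possibly becoming zero, in which case it drops out and $r_w^{\Gamma'} = 0 \leq r_w^\Gamma$); that a new atomic summand under $p_v^I$ of trace $\gamma_v - \alpha_v^{\Gamma'} = \gamma_v - \gamma_w$ appears exactly when $v \in B(\Gamma')$; and that atomic summands at all other vertices are unchanged. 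The case $\gamma_w > \gamma_v$ is handled symmetrically, with the atomic part of $\cA_e$ now sitting under $p_w$ and the roles of $v$ and $w$ swapped in the splitting and analysis. The free-dimension count forces $t_{\Gamma'}$ to the asserted value.
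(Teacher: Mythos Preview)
Your approach is essentially that of the paper: write $\cM(\Gamma') = (\cM(\Gamma)\oplus\C p_v)*_{\ell^\infty(\Gamma')}\cA_e$, then climb a chain of intermediate amalgamated free products obtained by discretizing $\cA_e$. The case split and the bookkeeping on the atomic summands are also as in the paper.

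There is, however, one point where ``exactly the pattern of Lemma~\ref{lem:H2}'' does not transfer. In the step $\cN_2 \to \cN_3$ (joining each pair $p_{w,k},p_{v,k}$ into an $M_2$ block) you propose to invoke Lemma~\ref{lem:DR2}, but its hypothesis fails: it requires $p_{w,k}$ and $p_{v,k}$ to be equivalent in $\cN_2$, whereas here each $p_{v,k}$ is a \emph{minimal central} projection in $\cN_2$ (precisely because $p_v\cN_0 p_v = \C$, as you note), while $p_{w,k}$ lies in the interpolated free group factor summand. The paper handles this step by the more elementary observation that adjoining a matrix unit from a projection inside the factor summand to a minimal central projection merely amplifies that factor; hence $p^{\Gamma}\cN_2 p^{\Gamma} = p^{\Gamma}\cN_3 p^{\Gamma}$ is an equality, not a proper standard embedding. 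The same remark applies to your step $\cN_0 \to \cN_1$ (splitting only $p_v$), which is likewise an equality on the $p^\Gamma$-corner. Since the remaining two steps ($\cN_1\to\cN_2$ via Lemma~\ref{lem:DR1} splitting $p_w$, and $\cN_3\to\cM(\Gamma')$ via Lemma~\ref{lem:DR1} tensoring with $L(\Z)$) \emph{are} genuine standard embeddings, the composite is still a standard embedding and your conclusion survives; but the mechanism at the $M_2$-joining step is different from that in Lemma~\ref{lem:H2} and should be stated correctly.
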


Notice that the natural inclusion $\cM(\Gamma) \rightarrow \cM(\Gamma')$ is not unital, but the compressed inclusion $p_{\Gamma}\cM(\Gamma)p_{\Gamma} \rightarrow p_{\Gamma}\cM(\Gamma')p_{\Gamma}$ is.

\begin{proof}

  Just as in the proof of Lemma \ref{lem:H2}, if the term $\overset{p}{\underset{\alpha}{\C}}$ appears where $\alpha \leq 0$ then this term is identically zero.

   Set $D = \ell^{\infty}(\Gamma')$.  We rescale all of the weights on $\Gamma$ such that all of the weights on $\Gamma'$ sum to 1.  We have 2 cases: when $\gamma_{v} > \gamma_{w}$ and when $\gamma_{w} \geq \gamma_{v}$.

  \emph{\underline{Case 1, $\gamma_{v} > \gamma_{w}$}}:  We look at the following sequence of inclusions:

   \begin{align*}\cM(\Gamma) \oplus \overset{p_{v}}{\underset{\gamma_{v}}{\C}} \subset \cN_{1} &= \left(\cM(\Gamma) \oplus \overset{p_{v}}{\underset{\gamma_{v}}{\C}}\right) \underset{D}{*} \left(\ell^{\infty}(\Gamma' \setminus \{v, w\}) \oplus \bigoplus_{k=1}^{n} \overset{p_{w, k}}{\underset{\gamma_{w}/n}{\C}} \oplus \left(\bigoplus_{k=1}^{n} \overset{p_{v, k}}{\underset{\gamma_{w}/n}{\C}} \oplus \overset{p_{v}^{I}}{\underset{\gamma_{v} - \gamma_{w}}{\C}}\right)\right)\\
   \cap &\\
  \cN_{2} &= \left(\cM(\Gamma) \oplus \overset{p_{v}}{\underset{\gamma_{v}}{\C}}\right) \underset{D}{*} \left(\ell^{\infty}(\Gamma' \setminus \{v, w\}) \oplus \bigoplus_{k=1}^{n}\overset{p_{w, k}, p_{v, k}}{\underset{\gamma_{w}/n}{M_{2}}} \oplus \overset{p_{v}^{I}}{\underset{\gamma_{v} - \gamma_{w}}{\C}}\right)\\
  \cap &\\
    \cM(\Gamma') &= \left(\cM(\Gamma) \oplus \overset{p_{v}}{\underset{\gamma_{v}}{\C}}\right) \underset{D}{*} \left(\ell^{\infty}(\Gamma' \setminus \{v, w\}) \oplus \underset{2\gamma_{w}}{L(\Z) \otimes M_{2}} \oplus \overset{p_{v}^{I}}{\underset{\gamma_{v} - \gamma_{w}}{\C}}\right).
    \end{align*}
    The projections $p_{w, k}$ are an orthogonal family with trace $\gamma_{w}/n$ in $\cA_{e}$ whose sum is $p_{w}$.  In $\cA_{e}$, $p_{v}$ decomposes as $\sum_{k=1}^{n}p_{v, k} + p_{v}^{I}$ with  $p_{v}^{I}$ supported in the atomic part of $\cA_{e}$, and the $p_{v, k}$ are an orthogonal family of projections with trace $\gamma_{w}/n$ supported in the diffuse part of $\cA_{e}$.
   By the inductive hypothesis,
    $$
    p_{w}\cM(\Gamma)p_{w} = \overset{p^{\Gamma}_{w}}{\underset{\frac{\alpha^{\Gamma}_{w}}{\gamma_{w}}}{L(\F_{t_{w}})}} \oplus \overset{r_{w}^{\Gamma}}{\underset{\frac{\gamma_{w} - \alpha^{\Gamma}_{w}}{\gamma_{w}}}{\C}},
    $$
     with $p_{w}^{\Gamma} = p_{w}p^{\Gamma}$.  We choose $n$ large enough such that $\frac{1}{n} + \frac{\gamma_{w} - \alpha_{w}}{\gamma_{w}} < 1$, i.e., so that $p_{w}\cM(\Gamma)p_{w} * L(\Z / n\Z)$ is an interpolated free group factor.
  From Lemma \ref{lem:DR1},
  $$
  p_{w}\cN_{1}p_{w} = p_{w}\cM(\Gamma)p_{w} * \left(\bigoplus_{k=1}^{n} \overset{p_{w, k}}{\underset{1/n}{\C}}\right) = \left(\overset{p'_{w}}{\underset{\frac{\alpha_{w}}{\gamma_{w}}}{L(\F_{t_{w}})}} \oplus \underset{\frac{\gamma_{w} - \alpha_{w}}{\gamma_{w}}}{\C}\right) * \left(\bigoplus_{k=1}^{n} \overset{p_{w, k}}{\underset{1/n}{\C}}\right),
  $$
   so it is an interpolated free group factor, and applying Lemma $\ref{lem:DR1}$ again, we see that
   $$
   p^{\Gamma}_{w}\cN_{1}p^{\Gamma}_{w} = p^{\Gamma}_{w}\cM(\Gamma)p^{\Gamma}_{w} * p^{\Gamma}_{w}\left[\left(\overset{p^{\Gamma}_{w}}{\underset{\frac{\alpha_{w}}{\gamma_{w}}}{\C}} \oplus \overset{r_{w}^{\Gamma}}{\underset{\frac{\gamma_{w} - \alpha_{w}}{\gamma_{w}}}{\C}}\right) * \left(\bigoplus_{k=1}^{n} \overset{p_{w, k}}{\underset{1/n}{\C}}\right)\right]p^{\Gamma}_{w}
   $$
   with the inclusion $p^{\Gamma}_{w}\cM(\Gamma)p^{\Gamma}_{w} \rightarrow p'_{w}\cN_{1}p'_{w}$ the canonical one.  Therefore $\displaystyle p^{\Gamma}_{w}\cM(\Gamma)p^{\Gamma}_{w} \overset{s.e.}{\hookrightarrow} p^{\Gamma}_{w}\cN_{1}p^{\Gamma}_{w}$, so it follows that $p^{\Gamma}\cM(\Gamma)p^{\Gamma} \overset{s.e.}{\hookrightarrow} p^{\Gamma}\cN_{1}p^{\Gamma}$ as well.  It is clear that $p_{v}^{I}$ will be a minimal central projection in $\cN_{2}$, and since the projections $p_{v, k}$ lie under the minimal projection $p_{v} \in \cM(\Gamma) \oplus \overset{p_{v}}{\underset{\gamma_{v}}{\C}}$, it follows that
   $$
   \cN_{2} = L(\F_{t_{2}}) \oplus \overset{p_{v}^{I}}{\underset{\gamma_{v} - \gamma_{w}}{\C}} \oplus \bigoplus_{u \in B(\Gamma)\setminus\{w\}} \underset{\gamma_{u} - \alpha^{\Gamma}_{u}}{\overset{r^{\Gamma}_{u}}{\C}},
   $$
   where $L(\F_{t_{2}})$ is an amplification of $p^{\Gamma}\cN_{1}p^{\Gamma}$.  Hence  $p^{\Gamma}\cN_{1}p^{\Gamma} = p^{\Gamma}\cN_{2}p^{\Gamma}$.  As a final step, we tensor each copy of $M_{2}$ with $L(\Z)$ to obtain $\cM(\Gamma')$ and apply Lemma \ref{lem:DR1} and Remark \ref{rem:Dyk} $n$ times as in the proof of Lemma \ref{lem:H2} to conclude that
   $$
   \cM(\Gamma') = L(\F_{t_{3}}) \oplus \overset{p_{v}^{I}}{\underset{\gamma_{v} - \gamma_{w}}{\C}} \oplus \bigoplus_{u \in B(\Gamma)\setminus\{w\}} \underset{\gamma_{u} - \alpha^{\Gamma'}_{u}}{\overset{r^{\Gamma}_{u}}{\C}}
   $$
   and $p^{\Gamma}\cN_{2}p^{\Gamma} \overset{s.e.}{\hookrightarrow} p^{\Gamma}\cM(\Gamma')p^{\Gamma}$.  Therefore $p^{\Gamma}\cM(\Gamma)p^{\Gamma} \overset{s.e.}{\hookrightarrow} p^{\Gamma}\cM(\Gamma')p^{\Gamma}$ and $\cM(\Gamma')$ has the desired form.

       \emph{\underline{Case 2, $\gamma_{w} \geq \gamma_{v}$}}:  We look at a sequence of inclusions similar to those in the previous case:
    \begin{align*}
     \cM(\Gamma) \oplus \overset{p_{v}}{\underset{\gamma_{v}}{\C}} \subset \cN_{1} &= \left(\cM(\Gamma) \oplus \overset{p_{v}}{\underset{\gamma_{v}}{\C}}\right) \underset{D}{*} \left(\ell^{\infty}(\Gamma' \setminus \{v, w\}) \oplus \left(\bigoplus_{k=1}^{n} \overset{p_{w, k}}{\underset{\gamma_{v}/n}{\C}} \oplus \overset{p_{w}^{I}}{\underset{\gamma_{w} - \gamma_{v}}{\C}}\right) \oplus \bigoplus_{k=1}^{n} \overset{p_{v, k}}{\underset{\gamma_{v}/n}{\C}}\right)\\
    \cap &\\
    \cN_{2} &= \left(\cM(\Gamma) \oplus \overset{p_{v}}{\underset{\gamma_{v}}{\C}}\right) \underset{D}{*} \left(\ell^{\infty}(\Gamma' \setminus \{v, w\})\oplus \bigoplus_{k=1}^{n} \overset{p_{w, k}, p_{v, k}}{\underset{\gamma_{v}/n}{M_{2}}} \oplus \overset{p_{w}^{I}}{\underset{\gamma_{w} - \gamma_{v}}{\C}}\right)\\
    \cap &\\
    \cM(\Gamma') &= \left(\cM(\Gamma) \oplus \overset{p_{v}}{\underset{\gamma_{v}}{\C}}\right) \underset{D}{*} \left(\ell^{\infty}(\Gamma' \setminus \{v, w\})\oplus \underset{2\gamma_{v}}{L(\Z)\otimes M_{2}} \oplus \overset{p_{w}^{I}}{\underset{\gamma_{w} - \gamma_{v}}{\C}}\right).
    \end{align*}
  The projections $p_{v, k}$ are an orthogonal family with trace $\gamma_{v}/n$ in $\cA_{e}$ whose sum is $p_{v}$.  In $\cA_{e}$, $p_{w}$ decomposes as $\sum_{k=1}^{n}p_{w, k} + p_{w}^{I}$ where $p_{w}^{I}$ is supported in the atomic part of $\cA_{e}$, and the $p_{w, k}$ are an orthogonal family of projections with trace $\gamma_{v}/n$ supported in the diffuse part of $\cA_{e}$.  We choose $n$ large enough so that $\frac{\gamma_{w} - \alpha_{w}}{\gamma_{w}} + \frac{\gamma_{v}}{n\gamma_{w}} < 1$.   Observe by the condition on $n$ that $p_{w}\cN_{1}p_{w} = \overset{p^{\cN_{1}}_{w}}{L(\F_{t'_{1}})} \oplus \underset{\gamma_{w} - \alpha_{w} - \gamma_{v}}{\C}$ where the copy of $\C$ is orthogonal to each $p_{w, k}$.  Therefore as in the proof of Lemma \ref{lem:H2} $p_{\Gamma}\cM(\Gamma)p_{\Gamma} \overset{s.e.}{\hookrightarrow} p_{\Gamma}\cN_{1}p_\Gamma$.  We next look at
   $$
   \cN_{1} \subset \cN_{2} = \left(\cM(\Gamma) \oplus \overset{p_{v}}{\underset{\gamma_{v}}{\C}}\right) \underset{D}{*} \left(\ell^{\infty}(\Gamma' \setminus \{v, w\})\oplus \bigoplus_{k=1}^{n} \overset{p_{w, k}, p_{v, k}}{\underset{\gamma_{v}/n}{M_{2}}} \oplus \overset{p_{v}^{I}}{\underset{\gamma_{w} - \gamma_{v}}{\C}}\right).
   $$
    Since the $p_{v,k}$ lie under the minimal central projection $p_{v} \in \cM(\Gamma) \oplus \overset{p_{v}}{\underset{\gamma_{v}}{\C}}$, the arguments above imply
    $$
    \cN_{2} = L(\F_{t_{2}}) \oplus \underset{\gamma_{w} - \gamma_{v} - \alpha_{w}}{\C} \oplus \bigoplus_{u \in L(\Gamma)\setminus\{w\}} \underset{\gamma_{u} - \alpha_{u}}{\overset{r^{\Gamma}_{u}}{\C}}
    $$
    and $p^{\Gamma}\cN_{1}p^{\Gamma} = p^{\Gamma}\cN_{2}p^{\Gamma}$.  To finish, we tensor each copy of $M_{2}$ with $L(\Z)$ and apply Lemma \ref{lem:DR1} and Remark \ref{rem:Dyk} $n$ times as in the end of the proof of Lemma \ref{lem:H2} to obtain
    $$
    \cM(\Gamma') = L(\F_{t_{3}}) \oplus \underset{\gamma_{w} - \gamma_{v} - \alpha_{w}}{\C} \oplus \bigoplus_{u \in L(\Gamma)\setminus\{w\}} \underset{\gamma_{u} - \alpha_{u}}{\overset{r^{\Gamma}_{u}}{\C}}
    $$
     with the inclusion $p^{\Gamma}\cM(\Gamma)p^{\Gamma} \rightarrow p^{\Gamma}\cM(\Gamma')p^{\Gamma}$ standard. \end{proof}

\begin{proof}[Proof of Theorem \ref{thm:H1}]

Note that if $\Gamma'$ and $\Gamma$ are connected, loopless, finite graphs, then $\Gamma'$ can be constructed form $\Gamma$ by considering the steps in Lemmas $\ref{lem:H2}$ and $\ref{lem:H3}$.  Therefore, we can deduce Theorem $\ref{thm:H1}$ by observing that the composite of standard embeddings is a standard embedding and that standard embeddings are preserved by cut-downs by projections. \end{proof}

%%%%%%%%%%%%%%%%%%%%%%%%%%%%%%%%%%%%%%%%%%%%%%%%%%%%%

\section{The GJS construction in infinite depth} \label{sec:GJSInfinite}

    Recall that the vertices on a principal graph for $A_{0} \subset A_{1}$ represent isomophism classes of irreducible $A_{0}-A_{0}$ and $A_{0}-A_{1}$ subbimodules  of tensor products of $X = _{A_{0}}L^{2}(A_{1})_{A_{1}}$ and its dual, $X^{*} = _{A_{1}}L^{2}(A_{1})_{A_{0}}$.  Assume $\Gamma$ is the principal graph for an infinite-depth subfactor.  If $*$ is the depth-0 vertex of $\Gamma$, then the factor $A_{0}$ as in the introduction is isomorphic to $p_{*}\cM(\Gamma)p_{*}$.    $\cM(\Gamma)$ is now a semifinite algebra where the weighting $\gamma$ on $\ell^{\I}(\Gamma)$ corresponds to the bimodule dimension obtained by identifying each vertex with an irreducible bimodule as above.  Under this identification, $\gamma_* = 1$ and $\delta\cdot\gamma_{v} = \sum_{w \sim v}n_{v, w}\gamma_{w}$ where $\delta = [A_{1}:A_{0}]^{1/2}$.  To circumvent the difficulty of dealing with a semifinite algebra, we realize that $A_{0}$ is an inductive limit of the algebras $p_{*}\cM(\Gamma_{k})p_{*}$ where $\Gamma_{k}$ is $\Gamma$ truncated at depth $k$.  To aid our computation of the isomorphism class of $A_{0}$, we have the following lemma, whose proof is a routine calculation and is identical to that in $\cite{MR2807103}$.

\begin{lem} \label{lem:GJS2}

The free dimension of $\cM(\Gamma_{k})$ is
$$
1 + \frac{1}{\Tr(F_{k})^{2}}\left( -\sum_{v \in \Gamma_{k}} \gamma_{v}^{2} + \sum_{v \in \Gamma_{k}}\sum_{w\sim v} n_{v, w}\gamma_{v}\gamma_{w}\right)
$$  where $w \sim v$ means $w$ is connected to $v$ in $\Gamma_{k}$, $F_{k} = \sum_{u \in \Gamma_{k}} p_{u}$, and $\Tr$ is the trace on the semifinite algebra $\cM(\Gamma)$.

\end{lem}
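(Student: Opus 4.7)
The plan is to exploit the additivity of free dimension under amalgamated free products,
\[
\fdim\bigl(\cM_1 \underset{D}{*} \cM_2\bigr) = \fdim(\cM_1) + \fdim(\cM_2) - \fdim(D),
\]
iterated $|E(\Gamma_k)| - 1$ times over the presentation $\cM(\Gamma_k) = \underset{\ell^\infty(\Gamma_k)}{*} (\cA_e, E_e)_{e \in E(\Gamma_k)}$ from Definition \ref{defn:vNGraph}. After normalizing the trace on $\cM(\Gamma_k)$ so that $\tr(p_v) = \gamma_v / T_k$, where $T_k := \Tr(F_k)$, the problem reduces to computing the free dimensions of $\ell^\infty(\Gamma_k)$ and of each $\cA_e$.

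The key computational input is that for an edge $e$ joining $v$ and $w$ (with $\gamma_v \geq \gamma_w$), the summand $M_2(\C) \otimes L(\Z)$ inside $\cA_e$ is a diffuse \emph{hyperfinite} algebra (since $L(\Z)$ is abelian, $M_2(\C)\otimes L(\Z) \cong L^\infty([0,1], M_2(\C))$), and therefore contributes nothing to $\fdim(\cA_e)$ under the convention of Notation \ref{nota:vNA}. Only the atomic summands $\C$ of trace $(\gamma_v - \gamma_w)/T_k$ and $\ell^\infty(\Gamma_k \setminus \{v,w\})$ contribute. A short calculation then gives
\[
\fdim(\cA_e) - \fdim(\ell^\infty(\Gamma_k)) \;=\; \frac{\gamma_v^2 + \gamma_w^2 - (\gamma_v - \gamma_w)^2}{T_k^2} \;=\; \frac{2\gamma_v \gamma_w}{T_k^2}.
\]

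Summing this increment over all $e \in E(\Gamma_k)$ and adding $\fdim(\ell^\infty(\Gamma_k)) = 1 - \sum_v \gamma_v^2 / T_k^2$, one obtains
\[
\fdim(\cM(\Gamma_k)) \;=\; 1 + \frac{1}{T_k^2}\Bigl(-\sum_{v \in \Gamma_k} \gamma_v^2 + \sum_{e \in E(\Gamma_k)} 2\gamma_{v(e)}\gamma_{w(e)}\Bigr).
\]
Reindexing the edge sum as a vertex sum---each edge joining $v$ and $w$ is counted once at $v$ and once at $w$, with multiple edges tracked by $n_{v,w}$---converts the last term into $\sum_v \sum_{w \sim v} n_{v,w} \gamma_v \gamma_w$, which is the claimed formula. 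The calculation is entirely mechanical; the only subtlety worth flagging is the classification of the $M_2(\C) \otimes L(\Z)$ summand as diffuse hyperfinite (rather than as a free group factor amplification), which ensures it contributes no $\gamma^2(t-1)$ term to $\fdim$.
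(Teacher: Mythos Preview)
Your argument is correct and is precisely the routine computation the paper alludes to (the paper does not spell out the proof, deferring to \cite{MR2807103}): apply the additivity formula $\fdim(\cM_1 *_D \cM_2) = \fdim(\cM_1) + \fdim(\cM_2) - \fdim(D)$ across the presentation $\cM(\Gamma_k) = *_{\ell^\infty(\Gamma_k)} \cA_e$, use that the diffuse hyperfinite summand $M_2(\C)\otimes L(\Z)$ contributes nothing, and sum the per-edge increments $2\gamma_v\gamma_w/T_k^2$. Your reindexing of the edge sum as the double vertex sum is also correct, since the graph is loopless.
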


\begin{thm} \label{thm:H2}

Let $\cP$ be an infinite depth subfactor planar algebra.  Then the factor $A_{0}$ in the construction of \cite{MR2732052} is isomorphic to $L(\F_{\I})$.

\end{thm}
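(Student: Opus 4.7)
The plan is to realize $A_{0}\cong p_{*}\cM(\Gamma)p_{*}$ as an inductive limit of interpolated free group factors whose parameters diverge. Let $\Gamma_{k}$ be the full subgraph of $\Gamma$ on vertices of depth at most $k$, equipped with the restricted Perron--Frobenius weighting (with $\gamma_{*}=1$). Each $\Gamma_{k}$ is finite, connected and loopless, and $\Gamma=\bigcup_{k}\Gamma_{k}$. Viewing each $\cM(\Gamma_{k})$ as the subalgebra of $\cM(\Gamma)$ generated by $\ell^{\infty}(\Gamma_{k})$ together with the semicircular generators $X_{e}$ for $e\in E(\Gamma_{k})$, the $p_{*}$-compressions assemble into an increasing union with $A_{0}=\varinjlim_{k}p_{*}\cM(\Gamma_{k})p_{*}$.

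By Theorem \ref{thm:H1} applied to each inclusion $\Gamma_{k}\subset\Gamma_{k+1}$, one has $\cM(\Gamma_{k})\cong L(\F_{t_{k}})\oplus A_{k}$ with $A_{k}$ finite-dimensional abelian, and $p^{\Gamma_{k}}\cM(\Gamma_{k})p^{\Gamma_{k}}\overset{s.e.}{\hookrightarrow}p^{\Gamma_{k}}\cM(\Gamma_{k+1})p^{\Gamma_{k}}$ is a standard embedding of interpolated free group factors. For $k\geq 1$, the Perron--Frobenius identity at $*$ gives $\alpha_{*}^{\Gamma_{k}}=\delta\gamma_{*}=\delta>1=\gamma_{*}$, so $*\notin B(\Gamma_{k})$ and $p_{*}\leq p^{\Gamma_{k}}$. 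Cutting the above standard embedding further by $p_{*}$ via Remark \ref{rem:Dyk}(4) produces a standard embedding $p_{*}\cM(\Gamma_{k})p_{*}\overset{s.e.}{\hookrightarrow}p_{*}\cM(\Gamma_{k+1})p_{*}$ between interpolated free group factors $L(\F_{s_{k}})\hookrightarrow L(\F_{s_{k+1}})$. Remark \ref{rem:Dyk}(3) then yields $A_{0}\cong L(\F_{s})$ with $s=\lim_{k}s_{k}$, a monotone limit; it therefore suffices to show $s=\infty$.

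To compute $s_{k}$, one combines the compression formula with the two expressions for $\fdim(\cM(\Gamma_{k}))$: the general formula from Section \ref{sec:preliminaries} (applied to $L(\F_{t_{k}})\oplus A_{k}$) and the graph-theoretic formula in Lemma \ref{lem:GJS2}. Writing $\Tr(F_{k})=\sum_{v\in\Gamma_{k}}\gamma_{v}$, a short manipulation yields
\[
s_{k}-1=(\delta-1)\sum_{v\in\Gamma_{k}}\gamma_{v}^{2}\;-\;R_{k}\;+\;\sum_{v\in B(\Gamma_{k})}\bigl(\gamma_{v}-\alpha_{v}^{\Gamma_{k}}\bigr)^{2},
\]
where the boundary contribution $R_{k}=\sum_{d(v)=k,\,d(w)=k+1,\,v\sim w}n_{v,w}\gamma_{v}\gamma_{w}$ collects the ``flow'' along edges of $\Gamma$ leaving $\Gamma_{k}$; Perron--Frobenius at depth-$(k+1)$ vertices gives $R_{k}\leq\delta\,D_{k+1}$, where $D_{j}=\sum_{d(v)=j}\gamma_{v}^{2}$.

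The main obstacle is verifying that the right-hand side diverges. Infinite depth of $\cP$ forces $\sum_{v\in\Gamma}\gamma_{v}^{2}=\infty$ (using $\delta\geq 2$ together with the sub-exponential growth of Perron--Frobenius weights along any infinite ray in $\Gamma$), so the leading term $(\delta-1)\sum_{v\in\Gamma_{k}}\gamma_{v}^{2}$ is unbounded. The delicate point is dominating the negative term $R_{k}$: one uses the estimate $R_{k}\leq\delta D_{k+1}$ together with the nonnegative correction $\sum_{v\in B(\Gamma_{k})}(\gamma_{v}-\alpha_{v}^{\Gamma_{k}})^{2}$, which becomes substantial precisely at the boundary vertices where most of the Perron--Frobenius flow escapes $\Gamma_{k}$ (i.e., when $\beta_{v}^{k}>(\delta-1)\gamma_{v}$). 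Combining these forces $s_{k}\to\infty$, hence $s=\infty$ and $A_{0}\cong L(\F_{\infty})$.
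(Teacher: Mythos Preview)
Your overall strategy matches the paper's exactly: realize $A_{0}$ as the inductive limit of the $p_{*}\cM(\Gamma_{k})p_{*}$, use Theorem~\ref{thm:H1} and Remark~\ref{rem:Dyk} to get a chain of standard embeddings of interpolated free group factors, compute the parameters $s_{k}$ by equating the two free-dimension formulas, and show $s_{k}\to\infty$. Your expression
\[
s_{k}-1=(\delta-1)\sum_{v\in\Gamma_{k}}\gamma_{v}^{2}-R_{k}+\sum_{v\in B(\Gamma_{k})}(\gamma_{v}-\alpha_{v}^{\Gamma_{k}})^{2}
\]
is correct and equivalent to what the paper obtains.

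The genuine gap is in the divergence argument. First, your justification that $\sum_{v}\gamma_{v}^{2}=\infty$ via ``$\delta\geq 2$ and sub-exponential growth of Perron--Frobenius weights'' is both vague and unnecessary: the $\gamma_{v}$ are dimensions of irreducible bimodules, hence $\gamma_{v}\geq 1$ for every vertex, so $\sum_{v\in\Gamma_{k-2}}\gamma_{v}^{2}\geq |V(\Gamma_{k-2})|\to\infty$ trivially. This is precisely the fact the paper invokes, and you seem to have missed it. Second, and more seriously, you do not actually control $R_{k}$. The bound $R_{k}\leq\delta D_{k+1}$ relates $R_{k}$ to data \emph{outside} $\Gamma_{k}$ and gives no useful lower bound on $s_{k}$, since $D_{k+1}$ can dwarf $\sum_{v\in\Gamma_{k}}\gamma_{v}^{2}$. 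Saying the correction term ``becomes substantial precisely at the boundary vertices'' and ``combining these forces $s_{k}\to\infty$'' is not an argument.

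What the paper does instead is a regrouping that makes every term manifestly nonnegative except for a single divergent piece. Using that $B(\Gamma_{k})$ consists only of depth-$k$ vertices, one rewrites the right-hand side as
\[
(\delta-1)\sum_{v\in\Gamma_{k-2}}\gamma_{v}^{2}
+\sum_{d(v)=k-1}\gamma_{v}\Bigl(-\gamma_{v}+\!\!\sum_{\substack{w\sim v\\ w\notin B(\Gamma_{k})}}\!\!n_{v,w}\gamma_{w}+\!\!\sum_{\substack{w\sim v\\ w\in B(\Gamma_{k})}}\!\!\alpha_{w}^{\Gamma_{k}}\Bigr)
+\sum_{v\in B'(\Gamma_{k})}\gamma_{v}\bigl(\alpha_{v}^{\Gamma_{k}}-\gamma_{v}\bigr).
\]
The last sum is nonnegative by definition of $B'(\Gamma_{k})$. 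For the middle sum, fix $v$ at depth $k-1$: if some neighbor $w$ of $v$ lies in $B(\Gamma_{k})$ then $\alpha_{w}^{\Gamma_{k}}\geq n_{v,w}\gamma_{v}\geq\gamma_{v}$, while if no neighbor of $v$ lies in $B(\Gamma_{k})$ the bracket equals $(\delta-1)\gamma_{v}>0$; either way the bracket is nonnegative. Hence $s_{k}-1\geq(\delta-1)\sum_{v\in\Gamma_{k-2}}\gamma_{v}^{2}\to\infty$ using $\gamma_{v}\geq 1$. This regrouping, together with the bimodule-dimension lower bound, is the missing idea in your proposal.
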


\begin{proof}

For a given $k$, we write
$$
\cM(\Gamma_{k}) = \overset{p_{k}}{\underset{\underset{w \not\in B(\Gamma_{k})}{\sum\gamma_{w}} + \underset{v \in B(\Gamma_{k})}{\sum \alpha^{\Gamma_{k}}_{v}}}{L(\F_{t_{k}})}} \oplus \bigoplus_{v \in B(\Gamma_{k})} \overset{p^{\Gamma_{k}}_{v}}{\underset{\gamma_{v} - \alpha^{\Gamma_{k}}_{v}}{\C}}.
$$
The free dimension of this algebra is
$$
1 + (t_{k}-1)\left(\frac{\sum_{w \not\in B(\Gamma_{k})}\gamma_{w} + \sum_{v \in B(\Gamma_{k})}\gamma_{v}}{\Tr(F_{k})}\right)^{2} - \frac{\sum_{v\in B(\Gamma_{k})}(\gamma_{v} - \alpha^{\Gamma_{k}}_{v})^{2}}{\Tr(F_{k})^{2}},
$$
so by Lemma \ref{lem:GJS2}, we have the equation
$$
(t_{k}-1)\left(\sum_{w \not\in B(\Gamma_{k})}\gamma_{w} + \sum_{v \in B(\Gamma_{k})}\alpha^{\Gamma_{k}}_{v}\right)^{2} =  \sum_{u \in \Gamma_{k}}\sum_{w\sim u} n_{u, w}\gamma_{u}\gamma_{w} -\sum_{u \in \Gamma_{k}}\gamma_{u}^{2} + \sum_{v\in B(\Gamma_{k})}(\gamma_{v} - \alpha^{\Gamma_{k}}_{v})^{2}.
$$
Observe that in $\Gamma_{k}$, the vertices up to depth $k-1$ are connected to all of their neighbors in $\Gamma$, so by the Perron-Frobenius condition and the fact that $\delta > 1$, none of these vertices are in $B(\Gamma_{k})$.  If we let $B'(\Gamma_{k})$ be the vertices $v$ at depth $k$ with $\gamma_{v} \leq \sum_{w\sim v}n_{v, w}\gamma_{w}$, then the right hand side of the above equality becomes

\begin{align*}
(\delta - 1)\sum_{v \in \Gamma_{k-2}}\gamma_{v}^{2} &+ \sum_{v \in \Gamma_{k-1}\setminus\Gamma_{k-2}}\gamma_{v} \left(-\gamma_{v} + \sum_{\substack{ w\not\in B(\Gamma_{k}) \\ w \sim{v}}} n_{v, w}\gamma_{w} + \sum_{\substack{w \in B(\Gamma_{k}) \\ w \sim v}} \alpha^{\Gamma_{k}}_{w} \right) \\  &+ \sum_{v \in B'(\Gamma_{k})} \gamma_{v}\left(-\gamma_{v} + \sum_{w \sim v}n_{v,w}\gamma_{w}\right)
\end{align*}
where we have used $\alpha^{\Gamma_{k}}_{v}  = \sum_{w\sim v}n_{v, w}\gamma_{w}$.  This quantity majorizes $(\delta - 1)\sum_{v \in \Gamma_{k-2}}\gamma_{v}^{2}$. Since the bimodule dimensions of any irreducible sumbimodule of $(X \otimes _{A_{1}} X^{*})^{\otimes_{A_{0}}^{n}}$ and $(X \otimes _{A_{1}} X^{*})^{\otimes_{A_{0}}^{n}} \otimes_{A_{0}} X$ are bounded below by 1, $\gamma_{v} \geq 1$ for all $v \in \Gamma$ so we conclude that
$$
(t_{k}-1)\left(\sum_{w \not\in L(\Gamma_{k})}\gamma_{w} + \sum_{v \in L(\Gamma_{k})}\alpha^{\Gamma_{k}}_{v}\right)^{2} \rightarrow \infty
$$
as $k \rightarrow \infty$.  From the amplification formula, $p_{*}\cM(\Gamma_{k})p_{*} = L(\F_{t'_{k}})$ where
$$
t'_{k} = 1 + (t_{k}-1)\left(\sum_{w \not\in L(\Gamma_{k})}\gamma_{w} + \sum_{v \in L(\Gamma_{k})}\alpha^{\Gamma_{k}}_{v}\right)^{2}.
$$  Hence $p_{\Gamma_{k}}\cM(\Gamma_{k})p_{\Gamma_{k}} \overset{s.e.}{\hookrightarrow} p_{\Gamma_{k}}\cM(\Gamma_{k+1})p_{\Gamma_{k}}$ so by Remark \ref{rem:Dyk}, $p_{*}\cM(\Gamma_{k})p_{*} \overset{s.e.}{\hookrightarrow} p_{*}\cM(\Gamma_{k+1})p_{*}$.  As $p_{*}\cM(\Gamma)p_{*}$ is the inductive limit of the $p_{*}\cM(\Gamma_{k})p_{*}$, it follows that $p_{*}\cM(\Gamma)p_{*} = L(\F_{t})$ where $t = \lim t'_{k} = \infty$. \end{proof}

\begin{cor} The factors $A_{k}$ are isomorphic to $L(\F_{\I})$.

\end{cor}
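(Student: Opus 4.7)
The plan is to deduce the corollary directly from Theorem \ref{thm:H2} by separating the even and odd cases and invoking two standard facts about interpolated free group factors recorded in the preliminaries.

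For the even-indexed factors, recall from the outline that $A_{2k}$ arises as the compression of the semifinite algebra $\cM_{+}$ by the projection $p_{2k}^{+}$, and therefore is a $\delta^{2k}$-amplification of $A_{0}$. Since Theorem \ref{thm:H2} gives $A_{0}\cong L(\F_{\I})$, and since $L(\F_{\I})_{\gamma}\cong L(\F_{\I})$ for every $\gamma>0$ by the compression formula $L(\F_{t})_{\gamma}=L(\F(1+\gamma^{-2}(t-1)))$ (this is just $1+\gamma^{-2}\cdot\I=\I$), we conclude $A_{2k}\cong L(\F_{\I})$ for every $k\ge 0$.

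For the odd-indexed factors, the plan is to rerun the entire argument with the dual principal graph $\Gamma^{*}$ in place of $\Gamma$, using the shaded semifinite algebra $\cM_{-}$ instead of $\cM_{+}$ as set up in the introduction. Since $\cP$ is infinite depth, the dual principal graph $\Gamma^{*}$ is also infinite, the weights $\gamma_{v}$ are bounded below by $1$, and the Perron--Frobenius condition $\delta\cdot\gamma_{v}=\sum_{w\sim v}n_{v,w}\gamma_{w}$ holds just as before. Hence Lemma \ref{lem:GJS2} and the free-dimension estimate in the proof of Theorem \ref{thm:H2} apply verbatim with $\Gamma$ replaced by $\Gamma^{*}$, yielding $A_{1}=p_{*}\cM(\Gamma^{*})p_{*}\cong L(\F_{\I})$. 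As $A_{2k+1}$ is the compression of $\cM_{-}$ by $p_{2k+1}^{-}$, it is an amplification of $A_{1}$ and so is again isomorphic to $L(\F_{\I})$ by the same compression formula.

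There is no real obstacle here; everything has been set up in Theorem \ref{thm:H2}, the preliminaries on interpolated free group factors, and the description of the GJS construction in the introduction. The only point that requires a sentence of justification is the assertion that the proof of Theorem \ref{thm:H2} is insensitive to whether one works with $\Gamma$ or $\Gamma^{*}$, which follows because the argument uses only the abstract properties of a connected infinite graph equipped with a Perron--Frobenius weighting with $\delta>1$ and $\gamma_{v}\geq 1$, all of which hold for the dual principal graph.
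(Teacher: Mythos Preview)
Your proposal is correct and follows essentially the same approach as the paper: the even case is handled by noting that $A_{2k}$ is an amplification of $A_{0}\cong L(\F_{\I})$, and the odd case by applying the analysis of Theorem \ref{thm:H2} to the dual principal graph $\Gamma^{*}$. Your write-up is somewhat more detailed (explicitly invoking the compression formula and spelling out why $\Gamma^{*}$ satisfies the hypotheses used in the proof of Theorem \ref{thm:H2}), but the logic is identical to the paper's.
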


\begin{proof}  If $k$ is even, then $A_{k}$ is an amplification of $A_{0}$ so it follows for $A_{k}$.  If $k$ is odd, then $A_{k}$ are cut-downs of $\cM(\Gamma')$ with $\Gamma'$ the dual principal graph of $\cP$.  Applying the same analysis as in Theorem \ref{thm:H2} shows that $A_{k} \cong L(F_{\infty})$. \end{proof}

%%%%%%%%%%%%%%%%%%%%%%%%%%%%%%%%%%%%%%%%%%%%%%%%%
\bibliographystyle{amsalpha}
\bibliography{InfiniteDepthGJS}
\end{document}